\newcommand{\pow}[1]{^{(#1)}}
\renewcommand{\Vec}[1]{\text{Vec}\lp #1\rp}
\numberwithin{equation}{section} 
\newcommand{\lp}{\left(}
\newcommand{\rp}{\right)}
\newcommand{\lc}{\left\{}
\newcommand{\rc}{\right\}}
\newcommand{\lb}{\left[}
\newcommand{\rb}{\right]}
\newcommand{\lv}{\left|}
\newcommand{\rv}{\right|}
\newcommand{\lV}{\left\|}
\newcommand{\rV}{\right\|}
\newcommand{\History}{\mathcal{H}}
\newcommand{\history}{\hbar}
\newcommand{\Nbb}{\mathbb{N}}
\newcommand{\Pbb}{\mathbb{P}}
\newcommand{\Rbb}{\mathbb{R}}
\newcommand{\Xbb}{\mathbb{X}}
\newcommand{\Fcal}{\mathcal{F}}
\newcommand{\Mcal}{\mathcal{M}}
\newcommand{\Ncal}{\mathcal{N}}
\newcommand{\Pcal}{\mathcal{P}}
\newcommand{\Rcal}{\mathcal{R}}
\newcommand{\Tcal}{\mathcal{T}}
\DeclareMathAlphabet{\mathdutchcal}{U}{dutchcal}{m}{n}
\newcommand{\pcal}{\mathdutchcal{p}}
\newcommand{\statespace}{\{1,\dots, d\}}
\newcommand{\indexeddata}{\left\{(X_0,a_0),\dots,(X_n,a_n)\right\}}
\newcommand{\naturalset}{\Nbb}
\newcommand\numberthis{\addtocounter{equation}{1}\tag{\theequation}}
\newcommand{\prob}{\Pbb}
\newcommand{\expec}{\mathbb{E}}
\newcommand{\probl}{\prob}
\newcommand{\initialD}{D_0}
\newcommand{\indicator}{\mathbbm{1}}
\newcommand{\nrm}[1]{\left\Vert #1 \right\Vert}
\newcommand{\beq}{\begin{eqnarray*}}
\newcommand{\eeq}{\end{eqnarray*}}
\newcommand{\beqn}{\begin{eqnarray}}
\newcommand{\eeqn}{\end{eqnarray}}
\newcommand{\ben}{\begin{enumerate}}
\newcommand{\een}{\end{enumerate}}
\newcommand{\bit}{\begin{itemize}}
\newcommand{\eit}{\end{itemize}}
\newcommand{\hide}[1]{}
\newcommand{\gn}{\, | \,}
\newcommand{\mmrisk}{\mathcal{R}_{m}}
\newcommand{\eps}{\varepsilon}
\newcommand{\vertiii}[1]{{\left\vert\kern-0.25ex\left\vert\kern-0.25ex\left\vert #1 
    \right\vert\kern-0.25ex\right\vert\kern-0.25ex\right\vert}}
\renewcommand{\epsilon}{\eps}
\begin{document}

\title{Central Limit Theorems for Transition Probabilities of Controlled Markov
Chains}

\author{\name Ziwei Su \email ziwei.su@northwestern.edu \\
       \addr Department of Industrial Engineering and Management Sciences\\
       Northwestern University\\
       Evanston, IL 60208, USA
       \AND
       \name Imon Banerjee \email imon.banerjee@northwestern.edu \\
        \addr Department of Industrial Engineering and Management Sciences\\
       Northwestern University\\
       Evanston, IL 60208, USA
       \AND 
        \name Diego Klabjan \email d-klabjan@northwestern.edu \\
        \addr Department of Industrial Engineering and Management Sciences\\
       Northwestern University\\
       Evanston, IL 60208, USA}

\editor{}

\maketitle

\begin{abstract}
We develop a central limit theorem (CLT) for a non-parametric estimator of the transition matrices in controlled Markov chains (CMCs) with finite state-action spaces. 
Our results establish precise conditions on the logging policy under which the estimator is asymptotically normal, and reveal settings in which no CLT can exist. 
We then build on it to derive CLTs for the value, Q-, and advantage functions of any stationary stochastic policy, including the optimal policy recovered from the estimated model.  
Goodness-of-fit tests are derived as a corollary, which enable to test whether the logged data is stochastic.
These results provide new statistical tools for offline policy evaluation and optimal policy recovery, and enable hypothesis tests for transition probabilities.
\end{abstract}

\begin{keywords}
  Markov chains, controlled Markov chains, mixing processes, non-parametric estimation, CLT
\end{keywords}

\section{Introduction}
A discrete-time stochastic process $\{(X_i, a_i)\}_{i \geq 0}$ is called a \textbf{controlled Markov chain} \citep{borkar1991topics} if the next ``state" $X_{i+1}$ depends only on the current state $X_i$ and the current (not necessarily Markovian) ``control" $a_i$, where actions $a_i$ depend only on the information available up to time $i$. 
Conditioned on $a_i$, the state sequence $\{X_i\}_{i=0}^n$ follows a Markov transition kernel (see Definition \ref{def:CMC} for the formal definition). 
We study the asymptotic distribution of the transition probabilities of finite state-action non-Markovian controlled Markov chains.

In general, controlled Markov chains with Markovian controls (where $a_i$ depends only on $X_i$) can be used to model time-homogeneous data (like i.i.d., Markovian), time-inhomogeneous data (like i.n.i.d., time-inhomogeneous Markovian \citep{dolgopyat_local_2023,merlevede_local_2022}, Markov decision process \citep{hernandez-lerma_recurrence_1991}), etc. 
Such models have wide applicability in reinforcement learning \citep{sutton2018reinforcement}, system stabilization \citep{yu_online_2023}, or system identification \citep{ljung_system_1999,mania_active_2020}. 

Beyond that, certain categories of adversarial Markov games \citep{wang_foundation_2024}, reward machines \citep{icarte_using_2018}, and minimum entropy explorations \citep{mutti_importance_2022} are known to induce Markovian state transitions with non-Markovian controls.  
Therefore, there is an acute need to sharply characterize the transition dynamics of controlled Markovian systems in the presence of non-Markovian controls. 

A particularly relevant downstream task in modern machine learning is offline (batch) reinforcement learning (RL).
Offline RL often begins with a fixed dataset of trajectories collected by some unknown logging policy (behavior policy), with no further control over data collection.
A fundamental challenge in this setting is to accurately estimate the Markov transition dynamics of the environment from these logged data. 
The estimated transition probabilities are then used to evaluate policies or to find an optimal policy for the given dynamics (see Section \ref{sec:RL} for more details).


There have been some recent developments towards understanding statistical properties of the marginal distributions of time-inhomogeneous Markov chains (see \cite{dolgopyat_local_2023} for details). However, despite the widespread prevalence of using the count-based estimator for transition probabilities in model-based RL \citep{mannor2005empirical, li2022settling, zhu2024uncertainty}, the statistical properties of this estimator are poorly understood.  
When the state space and the action space are finite, the simplest and most natural approach is to use the non-parametric count-based empirical estimator (model) of the transition probabilities (see \cref{eq:estimator}). 
We show that this estimator is essentially the maximum likelihood estimate of the transition probabilities (see Proposition \ref{prop:mle}).

One recent work \citep{banerjee2025off} establishes probably approximately correct (PAC) \citep{valiant_theory_1984} guarantees on the estimation error. Their analysis underpins the theoretical nature of PAC learning which does not always align with practice. In particular, it leaves open the important question of the exact asymptotic behavior of the estimators. This gap in theory limits our ability to evaluate estimation error, construct confidence intervals, or perform rigorous hypothesis tests for transition probabilities.

This paper addresses this gap by developing the first asymptotic normality theory for count-based transition estimators in controlled Markov chains with finite states and actions. 
We answer the following question:
\begin{quote}
    \centering
    \textit{Under what conditions does there exist a scaling such that the scaled estimation error of the empirical transition estimates obeys a CLT, and when does an asymptotic distribution fail to exist?}
\end{quote}

We show that the answer depends crucially on properties of the logging policy (such as sufficient coverage of all state-action pairs). 
In fact, under some mild recurrence and mixing conditions, we prove that the empirical transition probabilities converge in distribution to a Gaussian under self-normalized scaling equaling the square root of the visitation count of each state–action pair. 
Our analysis reveals that the estimated transition probabilities under this scaling have a multinomial behavior in the limit and are asymptotically independent across state-action pairs (see Theorem \ref{thm:eta-clt}).

Our results generalize classical CLTs for transition probabilities of ergodic Markov chains \citep{billingsley1961statistical} to non-ergodic, non-stationary stochastic processes, being a first of its kind. Furthermore, we also identify scenarios in which a CLT cannot be established for any estimator of the transition probabilities. 

To demonstrate the applicability of our results in downstream tasks such as offline policy evaluation (OPE) and optimal policy recovery (OPR), we then derive asymptotic guarantees for the value ($V$), action-value ($Q$), or advantage ($A$) functions for any stationary target policy by solving the Bellman equation with the plug-in transition estimator.
In particular, by analyzing the value of the optimal policy calculated from the estimated transition probabilities, we establish a CLT for the value function of the optimal policy under the true transition kernel. 

Our results enable, for example, confidence intervals on the value of the optimal policy and high-probability guarantees for recovering the true optimal policy, providing a principled way for OPE and OPR using asymptotic statistics. 

\paragraph{Contributions.} 
The paper develops an asymptotic theory for non-parametric estimation of transition probabilities in finite controlled Markov chains (CMCs) under general, possibly non-stationary and history-dependent logging policies. Our main contributions are as follows.
\begin{enumerate}
    \item \textbf{A central limit theorem for count-based transition estimators in CMCs.} We establish the first CLT for the classical empirical estimator of controlled transition kernels (Theorem~\ref{thm:eta-clt}). The result holds under mild assumptions on the return times of state–action pairs and weak $\eta$-mixing of the joint state–action process. 
    The covariance structure is multinomial and asymptotically independent across state–action pairs. 
    Based on the CLT, we derive a chi-square goodness-of-fit test for transition probabilities (Proposition~\ref{prop:gof}).
    \item \textbf{A characterization of when a CLT cannot hold.} We show that if the logging policy fails to visit certain state–action pairs infinitely often asymptotically, then no estimator can satisfy any CLT under any diverging scaling (Proposition~\ref{thm:no-clt}). This result draws clear regions of testable and untestable dynamics in offline RL data.
    \item \textbf{CLTs for value, Q-, and advantage functions under arbitrary stationary target policies.} Using the transition-kernel CLT, we obtain joint asymptotic normality for estimates of value, Q-, and advantage functions (Theorem~\ref{thm:VQA}), as well as the value of the optimal policy computed from the estimated model (Theorem~\ref{thm:optpol}). 
    These results provide a unified asymptotic framework for offline policy evaluation and optimal policy recovery.
\end{enumerate}
Beyond high-level contributions outlined above, the paper introduces two core technical innovations that may be of independent interest.
\begin{enumerate}
    \item \textbf{Relaxation of uniform bounded return times to sublinear growth.} 
    Prior work assumes finite expected return times for every state--action pair. This includes both regularity assumptions for Markov chains \citep[Chapter 11]{meyn_markov_2012} and its controlled counterpart \citep{banerjee2025off}. In contrast, our main theorem allows return times to grow sublinearly with the number of returns. This is enabled by \Cref{prop:count-lower-bound}, which gives a novel martingale-based lower bound on the expected number of visits to each state-control pair, even when expected return times grow.
    \item \textbf{A policy-dependent recurrence classification for CMCs.} Classical recurrence classification in Markov chains does not extend to controlled processes. 
    We identify the CMC analogue:
    \begin{itemize}
        \item a positive-recurrent-like regime guaranteeing CLTs (Proposition~\ref{prop:count-lower-bound}),
        \item a transient-like regime where no CLT is possible (Proposition~\ref{thm:no-clt}),
        \item and an intermediate “null-recurrent” regime that remains theoretically delicate.
    \end{itemize}
    This provides, to the best of our knowledge, the first testability boundary for asymptotic inference in non-stationary CMCs.
\end{enumerate}
\paragraph{Outline.}
The remainder of the paper is organized as follows.
Section~\ref{sec:lit-review} concludes the introduction with a review of the literature. 
Section~\ref{sec:notation} introduces the notions of hitting and return times, mixing, and ergodic occupation measure, and formally introduces our assumptions. 
Section~\ref{sec:CLT} introduces the CLT for transition probabilities, a goodness-of-fit test for transition probabilities, and scenarios where a CLT does not hold. 
Section~\ref{sec:RL} applies our main results to derive CLTs for the value, Q-, advantage functions and the optimal policy value. 

\subsection{Literature Review}
\label{sec:lit-review}
We divide the literature review into two parts. 
In the first part, we place our work in the context of the existing literature on non-parametric estimation for stochastic processes.
In the second part, we place our work in the context of reinforcement learning.

\paragraph{Non-parametric Estimation.} Non-parametric estimation and limit theory for Markov chains are well documented in the literature.
\cite{billingsley1961statistical} develops empirical transition estimators and asymptotic guarantees for finite ergodic chains, while \cite{yakowitz1979nonparametric} extends these ideas to infinite or continuous state spaces.
For time-homogeneous chains, \cite{geyer1998markov} proves laws of large numbers (LLNs) and \cite{jones2004markov} establishes corresponding CLTs under mixing conditions.
More recent work shifts attention to minimax sample complexity: \cite{wolfer_estimating_2019,wolfer2021statistical} derives optimal rates for ergodic chains, and \cite{banerjee2025off} does so for discrete-time, finite state-action controlled Markov chains.
Beyond transition-kernel estimation, classical results by \cite{dobrushin1956central,dobrushin1956centralII,rosenblatt1963some,rosenblatt1964some} provide LLNs and CLTs for normalized sample means in time-inhomogeneous settings, but leaves open the question of estimating the transition matrix itself.

Despite this progress, statistical inference for time-inhomogeneous controlled Markov chains—particularly when controls are stochastic and the state–action process is non-Markovian—remains unexplored.
The present paper fills this gap by establishing the first CLT for the count-based, non-parametric estimator of the transition kernel in such controlled settings, together with its implications for model-based offline reinforcement learning.

\paragraph{Reinforcement Learning.} In model-based offline (batch) reinforcement learning \citep{levine2020offline, kidambi2020morel, yu_mopo_2020}, it is important to learn or evaluate policies solely from logged trajectories without additional exploration.
Applications range from clinical decision making—where logged physician actions serve as controls \citep{shortreed2011informing, liu2020reinforcement, yu2021reinforcement, chen2021probabilistic}—to service-operations settings such as patient flow and inventory routing \citep{armony2015patient}.

Parallel works on OPE include importance sampling \citep{precup2000eligibility}, bootstrap \citep{hanna2017bootstrapping, hao2021bootstrapping}, doubly-robust \citep{pmlr-v48-jiang16, thomas2016data} and concentration-inequality-based \citep{thomas2015high} estimators that provide high-confidence bounds on a target policy’s value, while research on OPR \citep{antos2008learning, munos2008finite, chen2019information, zanette2021exponential, shi2022pessimistic} examines when a policy optimized on the learned model is near-optimal. 
Most existing guarantees are finite-sample or PAC in nature and assume either sufficient state–action coverage or Markovian behavior policies.
From a minimax sample-complexity perspective, \cite{li2022settling, yan2022model} establish sharp optimal rates for discounted and finite-horizon settings with stationary logging policies, and \cite{banerjee2025off} extends this analysis to discrete-time, finite state–action controlled Markov chains with stochastic, history-dependent logging.

Recent work by \cite{zhu2024uncertainty} studies statistical uncertainty quantification in reinforcement learning and establishes CLTs for value and Q-functions of the optimal policy.
In particular, their Theorem 1 builds on classical Markov chain CLTs \citep{trevezas2009variance} by viewing each state–action pair as a single composite state and thus assuming stationary, irreducible logging data.
While they also employ count-based estimators for transitions and empirical estimators for rewards, the asymptotic normality of these estimators is taken as an assumption rather than derived, and their framework does not accommodate non-stationary logging policies.
In contrast, we derive from first principles a CLT for the count-based estimator of controlled transition probabilities themselves under general logging policies, including fully non-stationary, stochastic, and history-dependent policies. 
By establishing transition-level asymptotics without stationarity, our results provide a fundamentally different inference foundation for OPE and OPR. 

\section{Notations, Background and Assumptions}
\label{sec:notation}
\subsection{Notations}  
We assume that the state process $\{X_i\}_{i \geq 0}$ takes values in a finite state space $\mathcal S$ with $|\mathcal S| = d$, and the action process $\{a_i\}_{i \geq 0}$ takes values in a finite action space $\mathcal A$ with $|\mathcal A| = k$.
Throughout this paper, all random variables are defined on a filtered probability space $(\Omega, \mathcal F, \mathbb F, \mathbb P)$, where $\mathbb F := \{\mathcal F_j\}_{j \geq 0}$, and $\mathcal F_j$ is a filtration with $\mathcal F_j \subset \mathcal F$ defined as below.
Let the history from time $p$ to $j$ be denoted by $\History_p^{j} := \left\{\lp X_i, a_i\rp\right\}_{i=p}^{j}$, and define the $\sigma$-algebra generated by $\History_0^{j}$ as $\mathcal F_j$.
For readability and to avoid over-notation, we use $\mathcal F_j$ in measure-theoretic contexts as in \cref{eq:ass1} and $\History_0^{j}$ when writing explicit conditioning in transition probabilities as in \cref{eq:CMC}.
 
Following \cite{borkar1991topics}, we introduce the following definition of a controlled Markov chain.

\begin{definition}
\label{def:CMC}
A controlled Markov chain (CMC) is an $\mathbb F$-adapted pair process $\left\{\lp X_i, a_i\rp\right\}_{i\geq 0}$
whose transition probabilities are given by
\begin{align}
\label{eq:CMC}
     M_{s,t}^{(l)} := \mathbb P\lp X_{i+1} = s \;\mid\; X_i = t, a_i = l \rp, \quad \text{ for all time points $i$}
\end{align}
and which satisfies the Markov property
\begin{align}
\mathbb P\lp X_{i+1} = s_{i+1} \;\mid\; X_i = s_i, a_i = l_i \rp = \mathbb P \lp X_{i+1} = s_{i+1} \;\mid\; \mathcal H_0^i = \history_0^i \rp, \nonumber
\end{align}
where $\history_0^i := \{X_0=s_0, a_0=l_0, \ldots, X_i=s_i, a_i=l_i\}$ is the sample history up to time $i$.
A Markov decision process (MDP) is a CMC with a reward process $\{r_i\}_{i \geq 0}$.
\end{definition}
The action sequence $\{a_i\}$ is adapted to $\{\mathcal F_i\}$ for all $i \geq 0$ and is allowed to be non-Markovian and time-inhomogeneous. 
For example, when $\{a_i\}$ is deterministic, $\{X_i\}$ forms a time-inhomogeneous Markov chain whose transition matrix at time $i$ is determined by $a_i$. 
Example~\ref{exp:return-time-growth} illustrates such a chain.
Conversely, when $a_i$ depends only on $X_i$, the pair process $\{(X_i,a_i)\}$ forms a time-homogeneous Markov chain.

For each state $s \in \mathcal S$, let $M_s$ represent the matrix
\begin{align*}
\begin{bmatrix}
     M_{s,1} \pow 1,\dots,M_{s,1} \pow k\\
     M_{s,2} \pow 1,\dots,M_{s,2} \pow k\\
     \vdots \\
     M_{s,d-1} \pow 1,\dots,M_{s,d-1} \pow k\\
     M_{s,d} \pow 1,\dots,M_{s,d} \pow k
\end{bmatrix}
.\numberthis \label{eq:M_s}
\end{align*}
As $d$ and $k$ are finite, the collection of such matrices is finite.

Let $N_s\pow l$ be the number of visits to the (state, action) pair $(s,l)$, and $N_{s,t}\pow l$ be the number of transitions from state $s$ to state $t$ under $l$. Formally, with $\indicator[\cdot]$ as indicator function, 
\begin{align*}
    N_s \pow l & :=\sum_{i=0}^{n-1} \indicator[X_i=s,a_i=l], & N_{s,t} \pow l :=\sum_{i=0}^{n-1} \indicator[X_i=s,X_{i+1}=t,a_{i}=l].  
\end{align*}
These quantities depend on the size of the offline data $n$.
We omit this dependency when there is no scope of confusion, otherwise we write $N_s \pow l(n)$ and $N_{s,t} \pow l(n)$.

Our count-based estimator denoted $\hat M_{s,t}\pow l$ of the transition probability from state $s$ to $t$ conditioned on action $l$, is defined as
\begin{align}
\label{eq:estimator}
    \Hat{M}_{s,t}^{(l)} := \frac{ N_{s,t}^{(l)} }{ N_s^{(l)}}.
\end{align}
It follows from first principles that the random variable $\Hat{M}_{s,t}^{(l)}$ is the maximum likelihood estimator of the transition probability $M_{s,t} \pow l$. The following proposition formalizes this and we provide a proof in Appendix~\ref{sec:prf-mle}.
\begin{proposition}
\label{prop:mle}
Random variable $\Hat{M}_{s,t}^{(l)}$ is the maximum likelihood estimator of the transition probability $M_{s,t} \pow l$.
\end{proposition}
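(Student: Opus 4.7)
The plan is to write down the full likelihood of the observed trajectory under the CMC model, isolate the factors that depend on the transition matrix $M$, and maximize over the simplex constraints. Using the chain rule and the CMC property in Definition~\ref{def:CMC}, the joint law of $\{(X_0,a_0),\dots,(X_n,a_n)\}$ factors as
\begin{align*}
\mathbb P(X_0)\,\mathbb P(a_0 \mid X_0)\prod_{i=1}^{n} M_{X_{i-1},X_i}^{(a_{i-1})}\,\mathbb P(a_i \mid \History_0^{i-1}, X_i).
\end{align*}
The initial-state factor and the conditional action factors depend on the (unknown) logging policy and the initial distribution $\initialD$, but not on $M$. Hence, for maximum likelihood estimation of $M$, they act as constants and can be dropped.

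The remaining $M$-dependent piece is $\prod_{i=1}^{n} M_{X_{i-1},X_i}^{(a_{i-1})}$. Grouping identical factors according to the values of $(X_{i-1},X_i,a_{i-1})$ and recalling the definitions of $N_{s,t}^{(l)}$ and $N_s^{(l)}$, this rewrites as $\prod_{s,l,t}(M_{s,t}^{(l)})^{N_{s,t}^{(l)}}$, so the log-likelihood is
\begin{align*}
\ell(M) \;=\; \sum_{(s,l)\in\mathcal S\times\mathcal A}\sum_{t\in\mathcal S} N_{s,t}^{(l)}\log M_{s,t}^{(l)},
\end{align*}
subject to $\sum_{t}M_{s,t}^{(l)}=1$ and $M_{s,t}^{(l)}\ge 0$ for every $(s,l)$. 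Both objective and constraints decouple across $(s,l)$, so the problem reduces to $dk$ independent categorical MLEs.

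For each $(s,l)$, the solution follows from a one-line Lagrange-multiplier calculation: setting $\partial/\partial M_{s,t}^{(l)}$ of $\sum_{t'}N_{s,t'}^{(l)}\log M_{s,t'}^{(l)} - \lambda_{s,l}\bigl(\sum_{t'}M_{s,t'}^{(l)}-1\bigr)$ to zero yields $M_{s,t}^{(l)} = N_{s,t}^{(l)}/\lambda_{s,l}$, and summing over $t$ forces $\lambda_{s,l}=N_s^{(l)}$, giving $\hat M_{s,t}^{(l)} = N_{s,t}^{(l)}/N_s^{(l)}$. There is no real obstacle here---the argument is essentially textbook once the likelihood is factored. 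The only point worth a footnote is the degenerate case $N_s^{(l)}=0$, where that row of $M$ is not identified; this contributes a zero exponent to $\ell(M)$ so any probability vector maximizes the likelihood on that row, and under Assumption~\ref{ass:return-time-growth} the event $\{N_s^{(l)}=0\}$ has vanishing probability as $n\to\infty$ by Proposition~\ref{prop:count-lower-bound}, so one may restrict attention to $\{N_s^{(l)}>0\}$ without loss.
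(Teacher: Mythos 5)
Your proof is correct and follows essentially the same route as the paper's: factor the trajectory likelihood via the CMC property, observe that the policy and initial-distribution terms are free of $M$, and solve the resulting per-$(s,l)$ categorical MLE by Lagrange multipliers. Your added remark on the degenerate case $N_s^{(l)}=0$ is a sensible point the paper omits, but otherwise the arguments coincide.
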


Understanding the asymptotic distribution of maximum likelihood estimators is a classic topic in statistics, with a long history in the analysis of i.i.d. and stationary data \citep{cramer1946mathematical, billingsley1961statistical}. 
Our goal here is to establish the asymptotic normality of the scaled estimation error
\[
b_n\lp \hat M_{s,t}\pow l (n) -  M_{s,t}\pow l \rp,
\]
where $b_n$ is a (possibly random) scaling factor.

In our setting, however, the challenge is considerably greater: the data is non-stationary, and this dependence structure makes determining the correct scaling in any CLT argument particularly delicate.
Our analysis reveals that the correct scaling factor is random and, in fact, equal to the visitation count $N_s \pow l(n)$.

Establishing such a CLT requires both sufficient coverage of the relevant state–action pairs and suitably weak temporal dependence. 
To formalize these conditions, we next introduce the necessary background concepts and assumptions.
\subsection{Assumptions and Implications}
\paragraph{Hitting and Return Times.}
For standard (uncontrolled) Markov chains, the notion of \emph{recurrence} is rigorously characterized using hitting and return times. In particular, a Markov chain can be classified as either positive recurrent, null recurrent, or transient—an exhaustive and mutually exclusive trichotomy. 
To the best of our knowledge, no canonical analogue of this classification exists for CMCs. 
We therefore define analogous notions through the return time of the state-action pairs.
We recursively define the ‘time of return’ for every state-action pair $(s,l)$ as follows.
\begin{definition}~\label{def:return-time}
The first \emph{hitting time} $(s,l)$ is defined as
\begin{align*}
&\tau_{s,l}\pow{1}\in \min\lc i > 0: X_i=s, a_i=l\rc.
\end{align*}
When $i\geq2$, the $i$-th {\emph{time to return}} (or {\emph{return time}}) of the state-action pair $(s,l)$ is recursively defined as 
\begin{align*}
  \tau_{s,l} \pow i \in \min\lc k>0:X_{\sum_{j=1}^{i-1}\tau_{s,l} \pow j+k}=s,a_{\sum_{j=1}^{i-1}\tau_{s,l} \pow j+k}=l \rc.  
\end{align*}
In these definitions, the sets on the right-hand side are singletons.
\end{definition}

The following assumption plays a role analogous to \textit{positive recurrence} in standard Markov chains, ensuring that every state-action pair $(s,l)$ is visited sufficiently often. 
\begin{assumption}~\label{ass:return-time-growth}
For all state-action pairs $(s,l) \in \mathcal{S} \times \mathcal{A}$, there exists $0< \alpha < 1$ such that
\begin{align}
\label{eq:ass1}
    \mathbb{E} \left[\tau^{(i)}_{s,l} \;\middle|\;
\mathcal{F}_{\sum_{p=1}^{i-1}\tau^{(p)}_{s,l}}\right] \leq T_i \quad \text{a.s.}, \quad T_i = O(i^{\alpha}).
\end{align}
\end{assumption}
Unlike previous assumptions in the literature (like that of regularity in Markov chains--- see Theorem 11.0.1 or Equation (11.1) in \cite{meyn_markov_2012} or Assumption 2 in \cite{banerjee2025off})---which require $T_i$ in (\ref{eq:ass1}) to be uniformly bounded for all $i$ Assumption \ref{ass:return-time-growth} allows the expected return time to grow sublinearly with $i$, which admits CLT for non-stationary CMCs in which some state–action pairs are visited increasingly rarely over time, yet still infinitely often with probability one (jointly established by Proposition~\ref{prop:count-lower-bound} and Lemma~\ref{lemma:consistency}).
This relaxation allows the (possibly non-stationary and history-dependent) logging policy to concentrate sampling on only a subset of state–action pairs rather than enforcing uniform exploration of the entire state–action space. 
Example \ref{exp:return-time-growth} illustrates such a non-stationary CMC, and Appendix~\ref{sec:inhomo-chain} discusses this example in detail.
\begin{example}[Inhomogeneous Markov chain]
\label{exp:return-time-growth}
A controlled Markov chain is said to be an inhomogeneous Markov chain if there exists a sequence of actions $l_0,l_1,\ldots \in \mathcal A$ such that for any time $i$, state $s$, sample history $\history_0^{i-1} \in (\mathcal{S}\times\mathcal{A})^i$, we have
\[
\prob\lp a_i=l_i\;\middle|\;\History_0^{i-1}=\history_0^{i-1},X_i=s \rp=1.
\]
Suppose that each action $l \in \mathcal A$ appears at least once in any window of fixed length
$T_i = O(i^\alpha)$ between successive returns to any state-action pair $(s,l)$, and that all
transition probabilities are strictly positive (i.e.,
$M^{(l)}_{s,t} \ge M_{\min} > 0$ for all $s,t,l$). 
We show in Appendix~\ref{sec:inhomo-chain} that, for this process, the probability of not returning to $(s,l)$ within $k$ steps conditional on the
past is bounded by
\[
\mathbb P \lp\tau \pow i_{s,l} > k \;\middle|\;
\mathcal{F}_{\sum_{p=1}^{i-1}\tau^{(p)}_{s,l}}\rp
\leq (1-M_{\min})^{\lfloor k/T_i \rfloor -1},
\]
which implies
\[
\mathbb{E} \left[\tau^{(i)}_{s,l} \;\middle|\;
\mathcal{F}_{\sum_{p=1}^{i-1}\tau^{(p)}_{s,l}}\right] = \sum_{k=1}^\infty \mathbb P \lp\tau \pow i_{s,l} > k \;\middle|\;
\mathcal{F}_{\sum_{p=1}^{i-1}\tau^{(p)}_{s,l}}\rp = O(i^\alpha).
\]
Hence Assumption~\ref{ass:return-time-growth} holds.
The full details are formalized in Proposition~\ref{prop:inhomogenous-mc} in Appendix~\ref{sec:inhomo-chain}.
\end{example}

The following proposition shows that Assumption~\ref{ass:return-time-growth} guarantees a minimum visitation frequency for all state-action pairs.
\begin{proposition}
    \label{prop:count-lower-bound}
    For controlled Markov chain that satisfies Assumption~\ref{ass:return-time-growth}, we have
    \begin{align*}
        \mathbb{E}\lb N_s \pow l (n) \rb = \Omega\lp n^{\frac{1}{1+\alpha}}\rp.
    \end{align*}
\end{proposition}
The proof of this proposition can be found in Appendix~\ref{sec:proof-count-lower-bound}.

\paragraph{Mixing Coefficients.}
Following \cite{kontorovich_concentration_2008}, we define the weak and uniform mixing coefficients to quantify temporal dependence in the paired process of states and actions. 
Intuitively, these coefficients measure how much the future of the process can change if we alter the state–action pair at a single point of the past. 

For any $j \leq n$, $j, n \in \mathbb N$, sample history $\history_0^{i-1} \in (\mathcal{S}\times\mathcal{A})^i$, let $\Tcal\subseteq(\mathcal{S}\times\mathcal{A})^{n-j+1}$, $s_1,s_2 \in \mathcal{S}$, and $l_1,l_2 \in \mathcal{A}$. 
For any $\history_0^{i-1}$ and $i<j$, we define
\begin{align*}
    \eta_{i,j}(\Tcal,s_1,s_2,l_1,l_2,\history_0^{i-1}, n) & := \left|\prob\lp\lp X_j, a_j,\dots, X_n, a_n\rp\in \Tcal|X_i=s_1,a_i=l_1,\History_0^{i-1}=\history_0^{i-1}\rp\right.\\
    & \left.\qquad -\prob\lp\lp X_j, a_j,\dots, X_n, a_n\rp\in\Tcal|X_i=s_2,a_i=l_2,\History_0^{i-1}=\history_0^{i-1}\rp\right|.
\end{align*}
Then the \emph{weak mixing} ($\bar \eta$-mixing) coefficient $\bar\eta_{i,j}(n)$ is defined as
\begin{align*}~\label{def:weak-mixing}
    \Bar{\eta}_{i,j}:= \underset{\color{black}\substack{\Tcal,s_1,s_2,l_1,l_2,\history_0^{i-1},\\\prob\lp X_i=s_1,a_i=l_1,\History_0^{i-1}=\history_0^{i-1}\rp>0,\\\prob\lp  X_i=s_2,a_i=l_2,\History_0^{i-1}=\history_0^{i-1}\rp>0}}{\sup} \eta_{i,j}(\Tcal,s_1,s_2,l_1,l_2,\history_0^{i-1}, n).\numberthis
\end{align*}
We impose the following boundedness condition on the cumulative decay of $\bar \eta$-mixing coefficients of the paired process $\{ (X_i, a_i)\}$. 
\begin{assumption}
\label{ass:eta-mix}
    There exists a constant $C_\Delta>1$ independent of $n$ such that, 
\begin{align*}
    \|\Delta_n\|:=\underset{1\leq i\leq n}{\max} \left(1+ \Bar{\eta}_{i,i+1}(n)+ \Bar{\eta}_{i,i+2}(n)+\dots \Bar{\eta}_{i,n}(n)\right)\leq C_\Delta.
\end{align*}
\end{assumption}
This assumption controls the cumulative temporal dependence of the paired process and, in particular, it helps bound the deviation of $N_s \pow l$ from its expectation $\mathbb{E}\left[N_s \pow l\right]$, a key step in establishing CLT.

However, verifying this condition directly is typically difficult; it requires bounding joint dependencies between all states and actions over the entire trajectory, which may be analytically intractable.

To make the condition more interpretable and easier to verify, we next show how it can be reduced to separate assumptions on the mixing of the state process and the action process. 
This decomposition isolates two distinct sources of dependence, allowing each to be analyzed independently.

\paragraph{Reduction of Mixing Coefficients.} 

We begin by defining the $\gamma$-mixing coefficients  $\gamma_{p,j,i}$ for actions as the following total variation distance 

  \begin{align*}
    \gamma_{p,j,i} := 
    \sup_{\substack{s_p, \history_{i+j}^{p-1}, \history_0^i\\\prob\lp X_p=s_p,\History_{i+j}^{p-1}=\history_{i+j}^{p-1},\History_0^i=\history_{0}^{i}\rp>0}} 
    & \Big\lVert 
        \probl\lp 
            a_p \Big| X_p = s_p, \History_{i+j}^{p-1} = \history_{i+j}^{p-1}, \History_0^i = \history_{0}^i
        \rp \\&\qquad \qquad -  \probl\lp  
            a_p \Big| X_p = s_p, \History_{i+j}^{p-1} = \history_{i+j}^{p-1}
        \rp
    \Big\rVert_{TV}.
\end{align*}    
The total variation norm here captures the worst-case sensitivity of the future action distribution to distant past information, thereby quantifying the degree of temporal dependence induced by the logging policy.

We now impose a summability condition on these $\gamma$-mixing coefficients.
This condition requires that, as the time lag increases, the effect of earlier histories on future actions decays sufficiently fast.
\begin{assumption}[Mixing of Actions]~\label{assume:control-mixing} There exists a constant $C\geq 0$ such that
\begin{align*}
    \sup_{i \geq 1}\sum_{j=1}^{{\color{black} \infty}} \sum_{p=i+j+1}^{{\color{black} \infty}} \gamma_{p,j,i}\leq \frac{C}{2}.
\end{align*}

\end{assumption}
In Markovian settings, when the sequence of actions $a_p$ depends only on $X_p$,
$
 \gamma_{p,j,i}=0
$
for all $p,j,i$. 
In such a case, Assumption~\ref{assume:control-mixing} is satisfied with $C=0$. 
This extends to the case where $a_p$ depends on $k \geq 2$ many past time points. 
If $a_p$ depends only on $X_{p-k+1}, a_{p-k+1}, \dots,$ $X_{p-1}, a_{p-1}, X_p$, then Assumption~\ref{assume:control-mixing} is satisfied with $C=(k-1)(k-2)$. 

With the definitions of $\gamma$-mixing coefficients for actions in hand, we introduce counterparts for the state process, which capture how the conditional distribution of future states depends on the current state–action pair.
This extends the classical Dobrushin coefficients \citep{dobrushin1956central,dobrushin1956centralII,mukhamedov2013dobrushin} for inhomogenous Markov chains to the realm of controlled Markov chains. 

For all integers $j\geq i$, we define the mixing coefficient 
\begin{align*}
& \bar \theta_{i,j}:= \underset{\substack{s_1,s_2\in\mathcal S,l_1,l_2\in \mathcal A, (s_1,l_1)\neq(s_2,l_2)\\ \prob(X_i=s_1,a_i=l_1)>0,\\ \prob(X_i=s_2,a_i=l_2)>0  }}{\sup}  \left\| \probl\lp X_j|X_i=s_1,a_i=l_1\rp-\probl\lp X_j|X_i=s_2,a_i=l_2\rp \right\|_{TV}.\numberthis\label{eq:def-theta}
\end{align*}
Intuitively, $\bar \theta_{i,j}$ measures the rate at which the \emph{state process} ``forgets'' its starting point when no condition on controls are given.

We now impose a summability condition on these $\bar \theta$-mixing coefficients.
This condition ensures that the influence of an initial state–action pair on future states decays sufficiently fast.
\begin{assumption}[Mixing of States]~\label{assume:chain-mix} There exists a constant $C_{\theta}\geq 0$ such that
\[
\sup_{i \geq 1}\sum_{j=i+1}^{{ \infty}} \bar \theta_{i,j}\leq C_{\theta}. 
\]
\end{assumption}

Example \ref{exp:ns-markov-controls} illustrates a non-stationary CMC that satisfies Assumptions~\ref{assume:control-mixing} and \ref{assume:chain-mix}, and Appendix~\ref{sec:non-stat-chain} discusses this example in detail.
\begin{example}[Non-stationary Markov controls]
\label{exp:ns-markov-controls}
A controlled Markov chain is said to have non-stationary Markov controls if for any time $i$, state $s$, action $l$, and sample history $\history_0^{i-1}$, we have
\[
\probl\lp a_i=l|X_i=s,\History_0^{i-1}=\history_0^{i-1}\rp=\probl\lp a_i=l|X_i =s\rp.
\]
We observe that the law of the action sequence can depend on the time $i$. 
Let $P_{s,l}^{(i)} := \prob(a_i=l|X_i=s)$.
We can write the transition probability of the state-action pair as
\begin{align*}
&\prob\lp X_i=t,a_i=l'|X_{i-1}=s,a_{i-1}=l\rp\\
= \ &\prob\lp X_i=t|X_{i-1}=s,a_{i-1}=l\rp\prob(a_i=l'|X_i=t)\\
= \ & M_{s,t}^{(l)}\cdot P_{t,l'}^{(i)}.
\end{align*}
It is straightforward to see that the state-action pair is a time-inhomogeneous Markov chain with transition probabilities given by $M_{s,t}^{(l)} P_{s,l'}^{(i)}$. 

As $a_i$ depends only on the \emph{current} state $X_i$ (and not on deeper history), it has no long-range dependence. Thus,
    \begin{align*}
\gamma_{p,j,i} & = 
    \sup_{s_p, \history_{i+j}^{p-1}, \history_0^i} 
    \Big\lVert 
        \probl\lp 
            a_p \Big| X_p = s_p, \History_{i+j}^{p-1} = \history_{i+j}^{p-1}, \History_0^i = \history_{0}^i
        \rp - 
        \probl\lp  
            a_p \Big| X_p = s_p, \History_{i+j}^{p-1} = \history_{i+j}^{p-1}
        \rp
    \Big\rVert_{TV} \\
    & \equiv 0.
\end{align*}
Therefore, Assumption~\ref{assume:control-mixing} holds for all controlled Markov chains with non-stationary controls with $C=0$.

Suppose that all transition probabilities are strictly positive (i.e., $M^{(l)}_{s,t} \ge M_{\min} > 0$ for all $s,t,l$). 
Then, by \citet[Lemma 12]{banerjee2025off}, Assumption~\ref{assume:chain-mix} holds with $C_\theta = 1/(d M_{min})$.
\end{example}
Neither of Assumptions \ref{assume:control-mixing} and \ref{assume:chain-mix} imply the other, as the following counter-examples illustrate.
\begin{enumerate}
    \item Let $(X_i,a_i)$ be an inhomogeneous Markov chain for which  
    $
    \sup_{1\leq i\leq \infty}\sum_{j=i+1}^\infty \bar \theta_{i,j}=\infty.
    $
   However, since the actions are deterministic, every inhomogenous Markov chain satisfies Assumption \ref{assume:control-mixing}. We prove this fact formally in \Cref{prop:inhomogenous-mc}. Therefore, this chain satisfies Assumption \ref{assume:control-mixing} but not Assumption \ref{assume:chain-mix}.
    \item For the second counter-example consider a controlled Markov chain $(X_i,a_i)$ where the $a_i$'s do not satisfy Assumption \ref{assume:control-mixing}. Let $X_i$ be independent draws from a uniform distribution over $\mathcal S$. It is easily seen that $\bar\theta_{i,j}=0$ for this example. Therefore, this chain satisfies Assumption \ref{assume:chain-mix} but not \ref{assume:control-mixing}.
\end{enumerate}

Finally, we observe that the previous assumptions on the states and actions imply the summability of the weak mixing coefficients. 
Due to \citet[Lemma 3]{banerjee2025off}, for any controlled Markov chain that satisfies Assumptions \ref{assume:control-mixing} and \ref{assume:chain-mix},
$
    \lVert\Delta_n\rVert\leq C+C_{\theta}+1.
$

\paragraph{Ergodic Occupation Measure.}
Next, we define the \emph{ergodic occupation measure} in the context of CMCs. 
\begin{definition}[Ergodic Occupation Measure]\label{def:semi-ergodic}
For every $(s,l) \in \mathcal S \times \mathcal A$, we define
\begin{align*}
p_s\pow l := \lim_{n\rightarrow\infty} \frac{\sum_{i=0}^{n-1} \prob\lp X_i=s,a_i=l \rp}{n},
\end{align*}
whenever the limit exists. 
The limit $p_s\pow l$, if it exists for all $(s,l) \in \mathcal S \times \mathcal A$, is called the ergodic occupation measure of the CMC.
\end{definition}

Observe that for stationary CMCs, $p_s\pow l=\prob(X_0=s,a_0=l)$.  
Some usual processes, like episodic CMCs, are not stationary, but an ergodic occupation measure exists. 
We make the following assumption essential for the CLTs for value, Q-, and advantage functions.
\begin{assumption}
\label{ass:semi-ergodic}
    The CMC has the ergodic occupation measure, and $\min_{s,l} p_s \pow l = T>0$.
\end{assumption}
The previous assumption can be relaxed with appropriate assumptions on the return time of $(X_i,a_i)$ which would translate into an assumption about the logging policy. 
\section{CLT for Controlled Markov Chains}
\label{sec:CLT}
In this section, we establish CLTs for count-based empirical estimates of transition probabilities in CMCs.
We begin with the ``properly scaled" CLT, which characterizes the asymptotic distribution of transition estimates under state-action pair-specific normalization, and provide a sketch of its proof, highlighting a new sampling construction that generalizes the classical approach of \citet{billingsley1961statistical}. 
We then introduce an auxiliary, $\sqrt{n}$-scaled CLT that serves as the basis for the CLTs of value, Q-, and advantage functions in Section~\ref{sec:RL}, followed by a discussion of regimes where no CLT exists. 
The section concludes with a goodness-of-fit test that illustrates the practical implications of the established asymptotic results.

\subsection{Properly Scaled CLT}
We begin with introducing additional notations.
Let $\odot$ denote the Schur or Hadamard product of two compatible vectors and let $\otimes$ denote the Kronecker product. 
Let $\Vec A$ be the vectorization of the matrix $A$ given by stacking the columns. 

Let $\mathbf{N}$ be the $dk\times d$ dimensional matrix given by
\begin{align*}
\begin{bmatrix}
     \lp\sqrt{N_1\pow l}\rp_{l=1}^k \otimes \mathbf{1}_d^\top\\
     \vdots\\
    \lp\sqrt{N_d\pow l}\rp_{l=1}^k \otimes \mathbf{1}_d^\top 
\end{bmatrix}, \text{ where } \mathbf{1}_d^\top := (1,\dots,1)^\top \in \Rbb^d.
\end{align*}
The $((s-1)k+l, j)$-th entry of $\mathbf{N}$ equals $\sqrt{N \pow l_s}$ for all $1 \leq j \leq d$.

Let $\mathbf{M}$ be the $dk\times d$ dimensional matrix given by $[M_1, M_2,\dots, M_d]^\top$ , where $M_s$ is given in \cref{eq:M_s}.
We denote its count-based estimated counterpart by $\mathbf{\hat M}$.
Let $\xi:=\lp\Vec{\mathbf{\hat M}^\top}-\Vec{\mathbf{M}^\top}\rp\odot\Vec{\mathbf{N}^\top}$. 
Elementwise, the $(s-1)dk+(l-1)d+t$-th element of $\xi$ is
$\sqrt{N_s\pow l}\lp{\hat M_{s,t}\pow l-M_{s,t}\pow l}\rp$.
We now state the CLT for count-based empirical estimates of transition probabilities in CMCs.
\begin{theorem}~\label{thm:eta-clt}
    Under Assumptions~\ref{ass:return-time-growth} and \ref{ass:eta-mix}, 
    \begin{align*}
    \xi  \overset{d}{\rightarrow} \Ncal(0,\Lambda),
    \end{align*}
    where $\Lambda$ is a covariance matrix. The elements of $\Lambda$ are given by $\lambda_{slt,s'l't'}$ which denotes the covariance between the $(s-1)dk+(l-1)d+t$-th and the $(s'-1)dk+(l'-1)d+t'$-th elements of $\xi$.
    Value $\lambda_{slt,s'l't'}$ has the expression
    \begin{align*}
    \lambda_{slt,s'l't'} = \indicator[(s,l)=(s',l')]\lp \indicator[t=t'] M_{s,t}\pow l-M_{s,t}\pow l M_{s',t'}\pow {l'}\rp.\numberthis\label{eq:correlation} 
    \end{align*}
\end{theorem}

We observe that the covariance matrix $\Lambda$ depends only on the transition probabilities
$\{M_{s,t}^{(l)}\}$ and does not depend on the existence of an ergodic occupation
measure.
Consequently, the asymptotic covariance structure is invariant to the logging policy, provided the conditions of Theorem~\ref{thm:eta-clt} hold.

Theorem~\ref{thm:eta-clt} generalizes the classical CLT for ergodic Markov chains \citep{billingsley1961statistical} to the setting of controlled and potentially non-stationary dynamics. 
Unlike standard Markov chain CLTs that rely on ergodicity or stationarity, our result accommodates both non-Markovian and time-inhomogeneous dependencies in the action sequence and transition dynamics. 
The covariance matrix $\Lambda$ mirrors that of multinomial trials, capturing the asymptotic covariance of transition counts within each state–action pair and the asymptotic independence across distinct pairs. 
In practice, $\Lambda$ can be consistently estimated by replacing $M_{s,t}\pow l$ with their empirical counterparts $\hat M_{s,t}\pow l$ in~\cref{eq:correlation}.

The main challenge in proving Theorem~\ref{thm:eta-clt} is the long-range dependence between the state and action sequences in a CMC induced by history-dependent, non-stationary policies.
This dependence breaks the Markov property of the marginal state process and invalidates classical proof techniques based on regenerative arguments \citep{doeblin1938deux, dobrushin1956central, dobrushin1956centralII, nummelin1978splitting}. 
To overcome this difficulty, we introduce an auxiliary sampling scheme that replicates the joint evolution of $(X_i,a_i)$ while decoupling temporal dependence across samples.
This construction extends the seminal sampling scheme approach of \citet{billingsley1961statistical} to the controlled and non-stationary setting. 
It provides the key technical innovation that enables the use of multinomial-type CLT arguments in the controlled regime.

We next provide a proof sketch of Theorem~\ref{thm:eta-clt}. 
The complete proof is deferred to Appendix~\ref{sec:proof-eta-clt}.

\subsection{Proof Sketch of Theorem~\ref{thm:eta-clt}} 
\begin{proof}
The proof adapts the classical CLT for ergodic, finite Markov chains \citep{billingsley1961statistical} to the controlled setting by constructing an auxiliary sampling scheme that decouples temporal dependence from the empirical transition counts. 
We outline the key steps.

\paragraph{Step 1 (Consistency).}
We first establish (see Lemma~\ref{lemma:consistency} in Appendix~\ref{sec:lemma_consistency}) that
\[
\frac{N_s\pow l}{\mathbb E \left[N_s\pow l \right]} \xrightarrow{a.s.} 1.
\]

\paragraph{Step 2 (Sampling Scheme).}
For each action $l \in \mathcal{A}$, we construct an infinite array of i.i.d. random variables that are also independent of the observed data $\indexeddata$:
\begin{align}
\Xbb^{(l)} = \left[
\begin{array}{ccccc}
X_{1,1}^{(l)} & X_{1,2}^{(l)} & \dots & X_{1,\tau}^{(l)} & \dots \\
X_{2,1}^{(l)} & X_{2,2}^{(l)} & \dots & X_{2,\tau}^{(l)} & \dots \\ 
\vdots & \vdots & \ddots & \vdots & \vdots \\
X_{d,1}^{(l)} & X_{d,2}^{(l)} & \dots & X_{d,\tau}^{(l)} & \dots
\end{array} \right].
\nonumber
\end{align}
Each element $X_{s,\tau}^{(l)}$ represents a possible next state when action $l$ is taken from state $s$, and follows the transition law
\[
\mathbb P \lp X_{s,\tau}^{(l)}=t \rp=M_{s,t}^{(l)}, \quad (s,t,\tau)\in \mathcal{S}\times\mathcal{S}\times\mathbb{N}.
\]
In addition, for every time $i\ge1$ and history $\lp \history_0^{i-1}, s_i\rp = \lp (s_0,l_0),\dots,(s_{i-1},l_{i-1}),s_i \rp \in(\mathcal{S}\times\mathcal{A})^{i}\times\mathcal{S}$, we define an independent random variable
\[
\alpha_i^{\history_0^{i-1}, s_i}\in\mathcal{A},
\]
with conditional distribution
\[
\mathbb P\lp\alpha_i^{\history_0^{i-1}, s_i}=l\rp
= \mathbb P \lp a_i=l \mid X_i=s_i, \History_0^{i-1}=\history_0^{i-1} \rp.
\]

The sampling proceeds recursively as follows. We initialize $(\tilde X_0,\tilde a_0)\overset{d}{=}(X_0,a_0)$.  
At each step $i\ge0$,
\begin{align*}
\tilde X_{i+1} = X^{(\tilde a_i)}_{\tilde X_i,\, \tilde N_{\tilde X_i}^{(i,\tilde a_i)}+1}, \qquad \tilde a_{i+1} = \alpha_{i+1}^{(\tilde X_0,\tilde a_0,\dots,\tilde X_{i+1})},
\end{align*}
where $\tilde N_{s}^{(i,l)}=\sum_{j\le i}\indicator[\tilde X_j=s,\tilde a_j=l]$ counts the number of visits to $(s,l)$ up to time $i$.  
Finally, let $\tilde N_s^{(l)}=\sum_{i=1}^n \indicator[\tilde X_i=s,\tilde a_i=l]$ denote the total number of visits under the constructed process.

Intuitively, each array $\Xbb^{(l)}$ provides an independent “stream” of next-state samples for action $l$, while the variables $\alpha_i$ govern the (possibly non-Markovian) control mechanism.  
The resulting sequence $(\tilde X_i,\tilde a_i)$ thus preserves the same joint distribution as the original process $(X_i,a_i)$ but decouples temporal dependence across samples.  
Formally, we show in Proposition~\ref{prop:mod-sampling-scheme} in Appendix~\ref{sec:prop_sampling} that $(\tilde X_0,\tilde a_0,\dots,\tilde X_n,\tilde a_n)\overset{d}{=}(X_0,a_0,\dots,X_n,a_n)$, yielding a coupling that retains the CMC law while enabling the application of multinomial CLTs.

\paragraph{Step 3 (Approximate Multinomial Representation).}
For each $(s,l)$, we define
\[
\tilde N_{s,t}\pow l = 
\sum_{\tau=1}^{\left\lfloor\mathbb E \left[N_s\pow l \right]\right\rfloor}
\indicator \!\left[\tilde X^{(l)}_{s,\tau}=t\right],
\quad
\tilde\xi_{s,l,t} =
\frac{\tilde N_{s,t}\pow l - \left\lfloor\mathbb E\left[N_s\pow l\right]\right\rfloor M_{s,t}\pow l}{\sqrt{\mathbb E\left[N_s\pow l\right]}}.
\]
Conditional on $N_s\pow l$, the vector
$\lp \tilde N_{s,1}\pow l,\dots,\tilde N_{s,d}\pow l\rp$
follows a multinomial distribution with number of trials $\mathbb E\left[N_s\pow l\right]$ and event probabilities $\lp M_{s,1}\pow l,\dots,M_{s,d}\pow l \rp$, implying that
\[
\tilde\xi \;\xrightarrow{d}\; \Ncal(0,\Lambda),
\]
where $\Lambda$ is given by~\cref{eq:correlation}.

\paragraph{Step 4 (Equivalence and Convergence).}
In this step (Lemma~\ref{lemma:tight} in Appendix~\ref{sec:lemma_tight}) we show that the difference between the empirical and auxiliary statistics vanishes in probability:
\[
\tilde\xi_{s,l,t}-\xi_{s,l,t}\xrightarrow{p}0.
\]
Combining these steps yields
$\xi \xrightarrow{d} \Ncal(0,\Lambda)$,
establishing Theorem~\ref{thm:eta-clt}.
\end{proof}

\subsection{Improperly Scaled CLT}
While Theorem \ref{thm:eta-clt} establishes asymptotic normality under state-action pair-specific normalization by $\sqrt{N_s \pow l}$, certain downstream analyses—such as aggregating transition effects or studying plug-in functionals—require a common global scaling. 
To this end, we introduce an ``improperly scaled" CLT, obtained by normalizing all transition estimates by the same factor (typically $\sqrt{n}$). 
Although this scaling is no longer correct for individual state–action pairs, it yields a unified limit that is analytically convenient and forms the basis for the CLTs of value, Q-, and advantage functions in Section~\ref{sec:RL}.

For a semi-ergodic controlled Markov chain with no absorbing states, we define $\pcal$ to be the matrix populated by the inverse of the square roots of the $p_s\pow l$ probabilities, 
\begin{align*}
\pcal = 
\begin{bmatrix}
     \lp1/\sqrt{p_1\pow l}\rp_{l=1}^k \otimes \mathbf{1}_d^\top\\
     \vdots\\
    \lp1/\sqrt{p_d\pow l}\rp_{l=1}^k \otimes \mathbf{1}_d^\top 
\end{bmatrix}
, 
\end{align*}
where
\begin{align*}
     \lp1/\sqrt{p_s\pow l}\rp_{l=1}^k :=
\begin{bmatrix}
     1/\sqrt{p_s\pow 1}\\
     \vdots\\
    1/\sqrt{p_s\pow k}
\end{bmatrix} 
\in \mathbb R^k.
\end{align*}
Using Theorem~\ref{thm:eta-clt} and Lemma~\ref{lemma:consistency}, the proof of the following corollary is immediate.
\begin{corollary}\label{cor:improper-clt}
    Let $\bar \xi :=\sqrt{n}\lp\Vec{\mathbf{\hat M}^\top}-\Vec{\mathbf{M}^\top}\rp$ be the ``improperly scaled" vector of the differences between $\mathbf{\hat M}$ and $\mathbf{M}$. 
    Consider the setting of Theorem~\ref{thm:eta-clt} and suppose that Assumptions~\ref{ass:return-time-growth}, \ref{ass:eta-mix} and \ref{ass:semi-ergodic} hold. 
    Then,  
    \begin{align*}
    \bar \xi\overset{d}{\rightarrow}\Ncal(0,\bar{\Lambda})
    \end{align*}
    where $\bar{\Lambda}$ is the improperly scaled covariance matrix. The elements of $\bar{\Lambda}$ are given by $\bar{\lambda}_{slt,s'l't'}\pow{is}$ which denotes the covariance between the $(s-1)dk+(l-1)d+t$-th and the $(s'-1)dk+(l'-1)d+t'$-th elements of $\bar \xi$.
    Value $\bar{\lambda}_{slt,s'l't'}$ has the expression
    \begin{align*}
    \bar{\lambda}_{slt,s'l't'} =  \frac{\indicator[(s,l)=(s',l')]\lp \indicator[t=t'] M_{s,t}\pow l-M_{s,t}\pow l M_{s',t'}\pow {l'}\rp}{\sqrt{p_s\pow l p_{s'}\pow{l'}}}.\numberthis\label{eq:correlation-is} 
    \end{align*}
\end{corollary}
While the statement is more general than the one in Theorem \ref{thm:eta-clt}, due to generous scaling, we require Assumption \ref{ass:semi-ergodic}.

\subsection{Non-Existence of CLT}
The validity of the preceding CLTs relies critically on the assumption that every state–action pair is visited infinitely often with a sufficiently regular frequency. 
When this condition fails—such as under transient or weakly exploratory control policies—the empirical transition estimates may not satisfy any CLT irrespective of scalings. 
This subsection formalizes these failure cases and delineates the boundary between recurrent regimes admitting asymptotic normality and those where no normalization yields a tight limiting distribution.

The following proposition entails occasions when there exists no CLT.


\begin{proposition}\label{thm:no-clt}
If there exists a state-action pair $(s,l)\in \mathcal S \times \mathcal A$ such that $\mathbb P(a_i=l|X_i=s)$ is independent of $d$ for all $i \geq 0$, and
\[
\sum_{i=0}^{\infty} \mathbb P(a_i=l|X_i=s) < \infty,
\]
then
    \[
    \lim_{n\to\infty} \sum_{i=1}^{n} \prob(X_i=s,a_i=l) < \infty,
    \]
and there exists a CMC with transition kernel $M$ such that for any possible sequence of estimators  
\(\{\hat M_{s,t}\pow l\}_{n\geq 1}\) and for any positive sequence \(\{b_n\}\) with \(b_n\rightarrow\infty\),
\begin{equation}\label{eq:tight}
    \left\{b_n\;
    \sup_{s,l,t}\bigl|
        \hat M_{s,t}\pow l(n) - M_{s,t}\pow l
    \bigr|\right\}_{n \geq 1}
\end{equation}
is not tight i.e., for every $\epsilon > 0$, 
\[
\limsup_{n \to \infty} \mathbb P \lp b_n\;
    \sup_{s,l,t}\bigl|
        \hat M_{s,t}\pow l(n) - M_{s,t}\pow l
    \bigr| > \epsilon \rp > 0.
\]
\end{proposition}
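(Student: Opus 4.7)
The plan is to combine a direct Borel--Cantelli argument with a two-kernel coupling that renders the target row $M_{s,\cdot}\pow l$ information-theoretically indistinguishable from the data on a positive-probability event. For the summability assertion I would simply observe $\prob(X_i=s,a_i=l) = \prob(X_i=s)\,\prob(a_i=l\mid X_i=s) \le \prob(a_i=l\mid X_i=s)$ and invoke the hypothesis, then apply the first Borel--Cantelli lemma to the events $A_i := \{X_i=s,\, a_i=l\}$ to conclude that $N_s\pow l(\infty) := \sum_{i\ge 0}\indicator_{A_i}$ is almost surely finite. Consequently only a random finite number of samples can ever inform any estimator about $M_{s,\cdot}\pow l$, which is the root cause of non-tightness.

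Next, to exhibit an explicit CMC witnessing the claim, I would sharpen the logging policy so that $\sum_i \prob(a_i=l\mid X_i=s) < 1$ (for instance $\prob(a_i=l\mid X_i=s) = 2^{-i-1}$); this still respects the hypothesis and additionally forces $\prob(E) \ge 1 - \sum_i \prob(A_i) > 0$ for the event $E := \{N_s\pow l(\infty)=0\}$. Assuming $d \ge 2$, I would compare two transition kernels $M^A$ and $M^B$ that agree on every row except $(s,l)$, with $M^A$ placing unit mass on state $2$ (its $(s,1,l)$-entry equals $0$) and $M^B$ placing unit mass on state $1$ (its $(s,1,l)$-entry equals $1$). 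On a common probability space, couple the two CMCs by sharing all uniform randomness for action draws and for next-state draws at every pair $(s',l') \ne (s,l)$, using independent draws only at $(s,l)$. On $E$ the differing row is never sampled, so the two coupled trajectories agree pathwise; hence $\prob^A(E) = \prob^B(E)$ and the conditional law of the entire observed data on $E$ is identical under $\prob^A$ and $\prob^B$.

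To deduce non-tightness, fix any data-measurable estimator $\hat M_{s,1}\pow l(n)$ and any scaling $b_n \to \infty$. The triangle inequality $|\hat M_{s,1}\pow l(n) - 0| + |\hat M_{s,1}\pow l(n) - 1| \ge 1$ forces at least one of the two summands to be $\ge 1/2$ pointwise on $E$; combined with the conditional-law coincidence, this yields
\[
\prob^A\!\lp |\hat M_{s,1}\pow l(n) - 0| \ge \tfrac12,\, E \rp + \prob^B\!\lp |\hat M_{s,1}\pow l(n) - 1| \ge \tfrac12,\, E \rp \;\ge\; \prob(E).
\]
A pigeonhole over $n$ then selects a single kernel $M^\star \in \{M^A, M^B\}$ and an infinite subsequence $n_k$ along which the corresponding summand is $\ge \prob(E)/2$; that is, the error at the $(s,1,l)$-entry under $M^\star$ exceeds $1/2$ with $\prob^\star$-probability at least $\prob(E)/2$. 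Since $b_{n_k} \to \infty$, for every $\epsilon > 0$ eventually $b_{n_k}/2 > \epsilon$, so the $\limsup$ condition of the proposition holds with bound $\ge \prob(E)/2 > 0$, the supremum over $(s,l,t)$ dominating the single $(s,1,l)$-entry.

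The main obstacle I expect is the quantifier ordering rather than any analytic step. Read literally, ``there exists an $M$ such that for any estimator $\hat M$\dots'' would be defeated by the oracle choice $\hat M \equiv M$, so the substantive content is the minimax statement ``for every estimator there exists a kernel in the constructed pair that defeats it,'' which the pigeonhole step makes precise. A secondary subtlety is that the coupling must preserve the joint law of $(X_i,a_i)$ under a possibly non-Markovian, history-dependent logging policy, but this is automatic because the conditional distribution of $a_i$ given the past is a functional of the pathwise-identical trajectory alone and does not reference the transition kernel.
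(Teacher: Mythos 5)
Your proof is essentially correct, but it takes a genuinely different route from the paper. The paper also reduces to a hypothesis-testing lower bound, but it does so with a multi-point family: it uses Borel--Cantelli to argue that $N_s\pow l(\infty)$ is bounded by some constant $N$, sets $|\mathcal S|=2(N+1)$, embeds a $2^{N+1}$-point family of rows $M_s\pow l(\xi)$ indexed by $\xi\in\{0,1\}^{N+1}$, and lower-bounds the minimax risk by the probability ($\geq 1/3$) that two designated transitions are never observed, so that a coordinate of $\xi$ must be guessed uniformly. Your argument is the leaner Le Cam two-point version: by choosing the logging policy so that $\sum_i \prob(X_i=s,a_i=l)<1$, the event $E=\{N_s\pow l(\infty)=0\}$ has positive probability, the two coupled kernels are pathwise indistinguishable on $E$, and the triangle inequality plus pigeonhole over $n$ defeats any estimator along a subsequence. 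Your approach buys two things: it avoids the paper's delicate (and, as stated, incorrect) step of upgrading almost-sure finiteness of $N_s\pow l(\infty)$ to an almost-sure \emph{uniform} bound $N$ (a.s.\ finite random variables need not be bounded; the paper really only needs a high-probability bound), and it works for any $d\geq 2$ rather than requiring $|\mathcal S|=2(N+1)$. What the paper's construction buys in exchange is that it does not need the never-visit event to have positive probability --- it tolerates $\sum_i\prob(X_i=s,a_i=l)\geq 1$ so long as the visit count is bounded with positive probability. Two small repairs to your writeup: your example $\prob(a_i=l\mid X_i=s)=2^{-i-1}$ gives $\sum_i 2^{-i-1}=1$, not $<1$, so either take $2^{-i-2}$ or note that $\prob(X_i=s,a_i=l)\leq \prob(X_i=s)\,2^{-i-1}$ with $\prob(X_i=s)$ bounded away from $1$; and your observation about the quantifier order is well taken --- the literal $\exists M\,\forall\hat M$ reading is defeated by the oracle estimator, and both your pigeonhole step and the paper's $\inf_{\hat M}\sup_\xi$ formulation in fact establish the minimax statement $\forall \hat M\,\exists M$, which is what the proposition should be read as asserting.
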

The proof of this proposition can be found in Appendix~\ref{sec:proof-no-clt}.

We remark that in contrast to Proposition \ref{prop:count-lower-bound}, it can be verified using the Borel-Cantelli lemma that $\lim_{n\to\infty} \sum_{i=1}^{n} \prob(X_i=s,a_i=l) < \infty$ implies \[\sum_{i=1}^{\infty} \prob\lp N_s \pow l(\infty) \geq i-1\rp/T_i < \infty\]
    where $N_s \pow l(\infty) := \lim_{n\to \infty} N_s \pow l(n)$.

Unlike in the uncontrolled setting, null-recurrence properties of CMCs are entwined with non-stationarity of the control sequence. 
Some controls may ensure persistent exploration of the entire state-action space, while others may confine the trajectory to a restricted subset. Consequently, the sharp positive-recurrent/null-recurrent/transient classification in Markov chains is replaced by a policy-dependent continuum in CMCs. 
This leads to a ``grey area" in defining null-recurrence, where the trajectory may drift widely under non-stationary controls while still revisiting a fixed state–action pair every so often.
We classify null-recurrence as the razor's edge between the regimes of Propositions \ref{prop:count-lower-bound} and \ref{thm:no-clt}. 
The statistical properties of such a controlled Markov chain are difficult to characterize by the current analysis. 
However, since exploration problems are not necessarily recurrent \citep{mutti_importance_2022}, this remains an important open direction for future studies.

\subsection{Goodness-of-Fit Test for the Transition Probabilities}
Beyond asymptotic characterization, the properly scaled CLT insinuates a method for direct statistical inference on transition probabilities through a goodness-of-fit (G.O.F.) test that assesses whether an estimated transition kernel is consistent with a hypothesized model. 
The test statistic leverages the covariance structure $\Lambda$ from Theorem \ref{thm:eta-clt}, leading to a chi-square-type limit under the null hypothesis and providing a practical diagnostic for model parameters.
One natural example is testing whether the observed data arise from a fully controlled MDP---where the next state depends on both state and action---or from a contextual bandit model, in which transitions are independent of the previous state.
This can be thought of as a CMC counterpart of the Scheffé's multiple comparison test \citep{scheffe1953method}. 

Formally, let $\{M \pow l_{s,t}\}_{(s,l,t)\in\mathcal S\times\mathcal A\times\mathcal S}$ denote a fixed, hypothesized transition kernel.
We consider the null hypothesis
\[
H_0:\quad \mathbb{P}(X_{i+1}=t \mid X_i=s, a_i=l) = M^{(l)}_{s,t}
\quad \text{for all } (s,l,t)\in\mathcal S\times\mathcal A\times\mathcal S.
\]
Under $H_0$, Theorem~\ref{thm:eta-clt} implies that the scaled estimation errors are asymptotically Gaussian, which in turn yields chi-square test statistics.
The following proposition states the resulting asymptotic distribution.

\begin{proposition}\label{prop:gof}
Given any $(s,l) \in \mathcal{S}\times\mathcal{A}$, under the null hypothesis $H_0$ and Assumptions~\ref{ass:return-time-growth} and ~\ref{ass:eta-mix},
\begin{align*}
    \sum_{t \in \mathcal S} \frac{\lp N_{s,t} \pow l - N_s \pow l M_{s,t} \pow l \rp^2}{N_s \pow l M_{s,t} \pow l}\overset{d}{\rightarrow} \chi^2_{d_{(s,l)} - 1},
\end{align*}
where $\chi^2_{d_{(s,l)} - 1}$ is the chi-square distribution with $d_{(s,l)} - 1$ degrees of freedom, and $d_{(s,l)}=\sum_{t\in\mathcal S}\indicator [M_{s,t} \pow l > 0]$.

Furthermore, the $dk$ chi-square statistics $\{\chi^2_{d_{(s,l)} - 1}\}_{(s,l) \in \mathcal{S} \times\mathcal{A}}$ are asymptotically independent, and 
\begin{align}
\label{eq:GOF}
    \sum_{(s, l) \in \mathcal S \times \mathcal A, t \in \mathcal S} \frac{\lp N_{s,t} \pow l - N_s \pow l M_{s,t} \pow l \rp^2}{N_s \pow l M_{s,t} \pow l} \overset{d}{\rightarrow} \chi^2_{\sum_{(s,l) \in \mathcal S \times \mathcal A} d_{(s,l)} - dk}.
\end{align}
\end{proposition}

\cref{eq:GOF} hence provides a G.O.F. measure for assessing the validity of assumed transition probabilities $\{M_{s,t} \pow l\}$.
The proof of this proposition can be found in Appendix~\ref{sec:proof-GOF}.

\section{CLTs for the Value, Q-, and Advantage Functions, and the Optimal Policy Value}
\label{sec:RL}

We now demonstrate how Theorem~\ref{thm:eta-clt} and Corollary~\ref{cor:improper-clt} extend naturally to downstream tasks in offline RL, such as \emph{offline policy evaluation} (OPE) and \emph{offline policy recovery} (OPR).  
In OPE, one seeks to estimate the value of a fixed target policy from logged trajectories generated under an unknown behavior policy, whereas in OPR, one aims to identify the optimal policy that maximizes the expected return given an estimated transition model.  
Both problems rely on accurate estimation of the value, Q-, and advantage functions, for which we now establish joint asymptotic normality results.  

Let $\Delta(\mathcal{A})$ denote the probability simplex over the action space $\mathcal A$, and let $\pi: \mathcal{S} \to \Delta(\mathcal{A})$ be a stationary target policy.
In offline RL, the (known) target policy is usually different from the (unknown) logging policy under which we obtain the logged data $\indexeddata$.

For each state $s$, denote $\pi_s = \lb \pi(s,1), \ldots, \pi(s,k) \rb$ and define $\Pi = \operatorname{diag}(\pi_1, \ldots, \pi_d)$ as a $d\times dk$ block-diagonal matrix.
Let $\tilde g := (\tilde g(x,a):(x,a)\in\mathcal{S}\times\mathcal{A})$ denote the per-state-action reward vector, and define the per-state expected reward
\[
g(x) = \sum_{a\in\mathcal{A}} \pi(x,a)\,\tilde g(x,a), \,
g = (g(x):x\in\mathcal{S})\in\mathbb{R}^d.
\]
For a discount factor $0<\alpha<1$, the value function is defined by
\[
V_\Pi = (I-\alpha\Pi \mathbf{M})^{-1} g, 
\]
and its plug-in estimate
$\hat V_\Pi = (I-\alpha\Pi\hat{\mathbf{M}})^{-1} g$
follows from the Bellman equation \citep{bertsekas2011dynamic}.

Similarly, let $\tilde r := (\tilde r(x,a,y):(x,a,y)\in\mathcal{S}\times\mathcal{A}\times\mathcal{S})$ denote the immediate reward on transition from $x$ to $y$ under action $a$, and define
\[
r(x,a) = \sum_{y\in\mathcal{S}} M\pow a_{x,y}\,\tilde r(x,a,y), \,
r=(r(x,a):(x,a)\in\mathcal{S}\times\mathcal{A})\in\mathbb{R}^{dk}.
\]
Then the Q-function satisfies \citep{agarwal2019reinforcement}
\[
Q_\Pi = (I-\alpha \mathbf{M}\Pi)^{-1} r,
\,
\hat Q_\Pi = (I-\alpha \hat{\mathbf{M}}\Pi)^{-1} r.
\]
Finally, the advantage function $A_\Pi = Q_\Pi - K V_\Pi$, with $K = \operatorname{diag}(\mathbf{1}_k,\ldots,\mathbf{1}_k)$, admits plug-in estimate $\hat A_\Pi = \hat Q_\Pi - K \hat V_\Pi$.

For compactness, we define the following matrices corresponding to the three functions:
\begin{align*}
\Sigma_V^\Pi &= \alpha^2\!\left[
 (I-\alpha\Pi\mathbf{M})^{-1}\Pi\otimes V_\Pi^\top
 \right]\!\bar\Lambda
 \!\left[
 \Pi^\top(I-\alpha\mathbf{M}^\top\Pi^\top)^{-1}\otimes V_\Pi
 \right],\\[0.4em]
\Sigma_Q^\Pi &= \alpha^2\!\left[
 (I-\alpha\mathbf{M}\Pi)^{-1}\!\otimes Q_\Pi^\top\Pi^\top
 \right]\!\bar\Lambda
 \!\left[
 (I-\alpha\Pi^\top\mathbf{M}^\top)^{-1}\!\otimes \Pi Q_\Pi
 \right],\\[0.4em]
\Sigma_A^\Pi &= \alpha^2\Big[
  (I-\alpha\mathbf{M}\Pi)^{-1}\!\otimes Q_\Pi^\top\Pi^\top
  - K \lp (I-\alpha\Pi\mathbf{M})^{-1}\Pi\otimes V_\Pi^\top \rp
 \Big]\!\bar\Lambda\\[-0.3em]
&\hspace{6.3em}\cdot
 \Big[
 (I-\alpha\Pi^\top\mathbf{M}^\top)^{-1}\!\otimes \Pi Q_\Pi
 - (\Pi^\top(I-\alpha\mathbf{M}^\top\Pi^\top)^{-1}\otimes V_\Pi)K^\top
 \Big],
\end{align*}
where $\bar\Lambda$ is the improperly scaled covariance matrix in Corollary~\ref{cor:improper-clt}.

The following theorem shows that $\Sigma_V\pow \Pi$, $\Sigma_Q\pow \Pi$ and $\Sigma_A\pow \Pi$ are the asymptotic covariance matrices for the scaled estimation errors $ \sqrt{n}\lp\hat V_\Pi - V_\Pi\rp$, $ \sqrt{n}\lp\hat Q_\Pi - Q_\Pi\rp$, $ \sqrt{n}\lp\hat A_\Pi - A_\Pi\rp$, respectively.
\begin{theorem}[CLTs for $V_\Pi$, $Q_\Pi$, and $A_\Pi$]
\label{thm:VQA}
Let $\indexeddata$ be a sample from a controlled Markov chain under a logging policy. 
Suppose that Assumptions~\ref{ass:return-time-growth}, \ref{ass:eta-mix}, and \ref{ass:semi-ergodic} hold for the logging policy, and let $\pi$ be a stationary target policy.
Then, as $n\to\infty$,
\begin{align*}
&\sqrt{n}\,(\hat V_\Pi - V_\Pi) \overset{d}{\rightarrow} \Ncal(0,\Sigma_V^\Pi), \\
&\sqrt{n}\,(\hat Q_\Pi - Q_\Pi)
\overset{d}{\rightarrow} \Ncal(0,\Sigma_Q^\Pi), \\
&\sqrt{n}\,(\hat A_\Pi - A_\Pi) \overset{d}{\rightarrow} \Ncal(0,\Sigma_A^\Pi),
\end{align*}
and each estimator is consistent:
$\hat V_\Pi\overset{p}{\to}V_\Pi$, 
$\hat Q_\Pi\overset{p}{\to}Q_\Pi$, and
$\hat A_\Pi\overset{p}{\to}A_\Pi$.
\end{theorem}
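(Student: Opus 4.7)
The plan is to derive all three CLTs by a single application of the delta method anchored at the improperly scaled CLT of Corollary~\ref{cor:improper-clt}. Consistency comes first: from Corollary~\ref{cor:improper-clt} we obtain $\hat{\mathbf{M}} - \mathbf{M} = O_p(n^{-1/2})$, and since $\alpha<1$ while $\Pi\mathbf{M}$ and $\mathbf{M}\Pi$ are substochastic, the resolvents $(I-\alpha\Pi\mathbf{M})^{-1}$ and $(I-\alpha\mathbf{M}\Pi)^{-1}$ exist. Their plug-in counterparts then exist with probability tending to one and converge to them by the continuous mapping theorem, yielding $\hat V_\Pi \overset{p}{\to} V_\Pi$, $\hat Q_\Pi \overset{p}{\to} Q_\Pi$, and $\hat A_\Pi \overset{p}{\to} A_\Pi$.

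For $V_\Pi$, I would linearize via the resolvent identity
\[
(I-\alpha\Pi\hat{\mathbf{M}})^{-1}-(I-\alpha\Pi\mathbf{M})^{-1}=\alpha(I-\alpha\Pi\hat{\mathbf{M}})^{-1}\Pi(\hat{\mathbf{M}}-\mathbf{M})(I-\alpha\Pi\mathbf{M})^{-1},
\]
which gives $\sqrt{n}(\hat V_\Pi - V_\Pi)=\alpha(I-\alpha\Pi\hat{\mathbf{M}})^{-1}\Pi\sqrt{n}(\hat{\mathbf{M}}-\mathbf{M})V_\Pi$. The $(s,l)$-row of $(\hat{\mathbf{M}}-\mathbf{M})V_\Pi$ is $\sum_t(\hat M_{s,t}\pow l-M_{s,t}\pow l)V_\Pi(t)$, so matching the ordering of $\xi\pow{is}$ (with $(s-1)dk+(l-1)d+t$-th entry $\sqrt{n}(\hat M_{s,t}\pow l-M_{s,t}\pow l)$) gives the row-wise identity $(\hat{\mathbf{M}}-\mathbf{M})V_\Pi=(I_{dk}\otimes V_\Pi^\top)\bigl(\Vec{\mathbf{\hat M}^\top}-\Vec{\mathbf{M}^\top}\bigr)/\sqrt{n}$. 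Combining this with the collapse $A(I_{dk}\otimes V_\Pi^\top)=A\otimes V_\Pi^\top$ (taking $A=(I-\alpha\Pi\mathbf{M})^{-1}\Pi$, since $(A\otimes 1)(I_{dk}\otimes V_\Pi^\top)=A\otimes V_\Pi^\top$) and Slutsky to replace $\hat{\mathbf{M}}$ by $\mathbf{M}$ in the outer resolvent yields
\[
\sqrt{n}(\hat V_\Pi-V_\Pi)=\alpha\bigl[(I-\alpha\Pi\mathbf{M})^{-1}\Pi\otimes V_\Pi^\top\bigr]\xi\pow{is}+o_p(1).
\]
Applying Corollary~\ref{cor:improper-clt} and $(A\otimes B)^\top=A^\top\otimes B^\top$ then produces exactly $\Sigma_V^\Pi$.

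The same template handles $Q_\Pi$ with the roles of $\Pi$ and $\mathbf{M}$ swapped: the resolvent identity gives $\sqrt{n}(\hat Q_\Pi-Q_\Pi)=\alpha(I-\alpha\hat{\mathbf{M}}\Pi)^{-1}(\hat{\mathbf{M}}-\mathbf{M})\Pi Q_\Pi+o_p(1)$, and the analogous row-vectorization with $\Pi Q_\Pi$ in place of $V_\Pi$ produces the linear map $\alpha[(I-\alpha\mathbf{M}\Pi)^{-1}\otimes Q_\Pi^\top\Pi^\top]\xi\pow{is}$, whose $\bar\Lambda$-sandwich is $\Sigma_Q^\Pi$. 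For the advantage function, linearity of the plug-in map gives $\sqrt{n}(\hat A_\Pi-A_\Pi)=\sqrt{n}(\hat Q_\Pi-Q_\Pi)-K\sqrt{n}(\hat V_\Pi-V_\Pi)$, which is the image of the \emph{same} $\xi\pow{is}$ under the combined linear map $\alpha[(I-\alpha\mathbf{M}\Pi)^{-1}\otimes Q_\Pi^\top\Pi^\top]-\alpha K[(I-\alpha\Pi\mathbf{M})^{-1}\Pi\otimes V_\Pi^\top]$; expanding its $\bar\Lambda$-sandwich using $(A\otimes B)^\top=A^\top\otimes B^\top$ and the identity $K(P\otimes v^\top)=(KP)\otimes v^\top$ (valid when the second factor is a row vector) reproduces $\Sigma_A^\Pi$ term-by-term.

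The main technical care lies in the Kronecker-product bookkeeping, particularly verifying the row-wise vectorization identity $Xv=(I\otimes v^\top)\Vec{X^\top}$ and the collapse $A(I_{dk}\otimes v^\top)=A\otimes v^\top$ against the explicit coordinate ordering of $\xi\pow{is}$, and ensuring that the advantage-function combined map aligns with the displayed $\Sigma_A^\Pi$. Once the indexing is aligned, the remainder is routine Slutsky, continuous mapping, and the delta method; no additional probabilistic input beyond Corollary~\ref{cor:improper-clt} and the established consistency of $\hat{\mathbf{M}}$ is required.
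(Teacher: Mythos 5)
Your proposal is correct and follows essentially the same route as the paper's proof: linearize via the resolvent identity $A^{-1}-B^{-1}=A^{-1}(B-A)B^{-1}$, vectorize to express each error as a linear (Kronecker-product) map applied to $\sqrt{n}\lp\Vec{\mathbf{\hat M}^\top}-\Vec{\mathbf{M}^\top}\rp$, and conclude with Corollary~\ref{cor:improper-clt} and Slutsky's theorem, with the advantage function handled by linearity of $\hat Q_\Pi - K\hat V_\Pi$. The only cosmetic difference is that you spell out the row-wise vectorization identity and the Kronecker collapse explicitly, whereas the paper applies the transpose-and-vectorize step directly.
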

The proof of this theorem can be found in Appendix~\ref{prf:VQA}. Observe that Theorem \ref{thm:VQA} derives the asymptotic distribution for the estimated $V/Q/A$ functions. In \Cref{thm:optpol}, we produce the asymptotic distribution of the optimal value function. Similarly, we derive the CLT for the $Q$-function.
\begin{corollary}[Properly scaled CLT for $Q_\Pi$]
\label{cor:Q-proper}
Suppose that Assumptions~\ref{ass:return-time-growth}, \ref{ass:eta-mix}, and \ref{ass:semi-ergodic} hold for the logging policy, and let $\pi$ be a stationary target policy, then the properly scaled Q-estimation error satisfies
\[
(\hat Q_\Pi - Q_\Pi) \odot  \Vec{\mathbf{N}^\top} \overset{d}{\rightarrow} 
\Ncal \lp 0, \Lambda_Q^\Pi \rp,
\]
where the $(sl,s'l')$-th element of $\Lambda_Q^\Pi$ equals 
$\lambda_{sl,s'l'}^{Q_\Pi}=\sqrt{p_s \pow l\,p_{s'} \pow{l'}}\,\sigma_{sl,s'l'}^{Q_\Pi}$, 
with $\sigma_{sl,s'l'}^{Q_\Pi}$ denoting the $(sl,s'l')$-th element of $\Sigma_Q^\Pi$.
\end{corollary}
The proof of this corollary can be found in Appendix~\ref{sec:prf-Q-thm}.

Although the Q-estimation error is scaled by the visitation counts, this ``correct'' scaling does
not remove dependence on the ergodic occupation measure in the asymptotic covariance.
The reason is structural rather than technical.
The count-based scaling normalizes the local estimation noise of each transition probability,
rendering the errors asymptotically multinomial.
However, the Q-function is a global functional of the transition kernel: estimation errors
from different state–action pairs are propagated and aggregated through repeated
applications of the Bellman operator.
As a result, the asymptotic variance depends on the long-run visitation frequency of
state-action pairs under the logging policy.

The optimal policy can be found, for instance, by policy iteration \citep[Chapter 1]{bertsekas2011dynamic} or policy gradient \citep[Chapter 13]{sutton2018reinforcement}. 
For any policy $\pi$ let $\Pi$ be its block diagonal matrix and let $\Pi_{opt}$ and $\hat \Pi_{opt}$ be the block-diagonal matrices associated with the optimal policies $\pi_{opt}$ and $\hat \pi_{opt}$, each maximizing the expected reward under the true and estimated transition matrices $\mathbf{M}$ and $\mathbf{\hat M}$, respectively. 
The following theorem characterizes the asymptotic behavior of plug-in estimator of the value function  for the estimated optimal policy.
\begin{theorem}\label{thm:optpol}
    Let us assume that for any $\eps>0$, and policy $\pi'$ such that $\lV \Pi_{opt}-\Pi'\rV_{\infty}>\eps$, the value functions corresponding to $\pi_{opt}$ and $\pi'$ satisfy  
    $\inf_{s\in \mathcal{S}}\lc V_{\Pi_{opt}}(s)-V_{\Pi'}(s)\rc>0$. 
    Then $\hat \Pi_{opt}\overset{p}{\rightarrow}\Pi_{opt}$ and,
    \begin{align*}
         \sqrt{n}\lp\hat V_{\hat \Pi_{opt}}-V_{\Pi_{opt}}\rp \overset{d}{\rightarrow} \Ncal\lp0,\Sigma_V\pow{\Pi_{opt}}\rp.
    \end{align*}
\end{theorem}
The proof of this theorem can be found in Appendix~\ref{prf:optpol}.

\paragraph{Implications for OPE and OPR.}
Taken together, Theorems~\ref{thm:VQA} and \ref{thm:optpol} yield an asymptotic
framework for model-based offline reinforcement learning.
We now formalize how the established CLTs can be used to construct asymptotically valid confidence intervals for OPE and OPR.

Without loss of generality, we consider inference for the value function.
Given logged data $\{(X_i, a_i)\}_{i=0}^{n-1}$ generated under a possibly history-dependent logging policy, and a fixed stationary target policy $\pi$, the goal of OPE is to infer the value function $V_\Pi$.
By Theorem~\ref{thm:VQA}, the scaled estimation error of $V_\Pi$ is jointly asymptotically normal:
\[
\sqrt{n}\,(\hat V_\Pi - V_\Pi) \overset{d}{\rightarrow} \Ncal(0,\Sigma_V^\Pi).
\]
Next, let $\hat \Sigma_V^\Pi$ denote the plug-in estimator of $\Sigma_V^\Pi$. Then
\[
\hat \Sigma_V^\Pi = \alpha^2\!\left[
 (I-\alpha\Pi\hat{\mathbf{M}})^{-1}\Pi\otimes \hat V_\Pi^\top
 \right]\! \hat{\bar{\Lambda}}
 \!\left[
 \Pi^\top(I-\alpha \hat{\mathbf{M}}^\top\Pi^\top)^{-1}\otimes \hat V_\Pi
 \right],
\]
where the elements of $\hat{\bar{\Lambda}}$ are given by
\begin{align*}
    \hat{\bar{\lambda}}_{slt,s'l't'} =  \frac{\indicator[(s,l)=(s',l')]\lp \indicator[t=t'] \hat M_{s,t}\pow l- \hat M_{s,t}\pow l \hat M_{s',t'}\pow {l'}\rp}{\sqrt{\hat{p}_s\pow l \hat{p}_{s'}\pow{l'}}}, \ \hat{p}_s\pow l = \frac{N_s \pow l}{n}.
\end{align*}

For a target level $1-\delta$, the confidence set
\begin{align*}
\operatorname{CS}^{\delta}_{V_\Pi}
=\left\{\, v\in\mathbb{R}^{d}:\;
n\,(\hat V_\Pi - v)^{\top}\big(\widehat{\Sigma}^{(\Pi)}_V\big)^{-1}(\hat V_\Pi - v)
\le \chi^2_{1-\delta}(d)\,\right\}
\end{align*}
is an asymptotically valid $100(1-\delta)\%$ confidence set for $V_\Pi$, where $\chi^{2}_{1-\delta}(d)$ is the $(1-\delta)$-quantile of the chi-squared distribution with $d = |S|$ degrees of freedom.

Let $\sigma^{2}_{s}$ denote the marginal variance of the $s$-th scaled estimation error $\sqrt{n}\lp \hat V_\Pi(s) - V_\Pi(s) \rp$, the $(s,s)$-th entry of $\Sigma_V^\Pi$ and let $\hat \sigma^{2}_{s}$ denote its plug-in estimator, the $(s,s)$-th entry of $\hat \Sigma_V^\Pi$.
Then an approximate two-sided $100(1-\delta)\%$ confidence interval for $V_\Pi(s)$ is
\begin{align*}
\hat V_\Pi(s)
\ \pm\
z_{1-\delta/2}\,
\sqrt{\frac{\hat{\sigma}^{2}_{s}}{n}},
\ s \in \mathcal{S},
\end{align*}
where $z_{1-\delta/2}$ is the $(1-\delta/2)$-quantile of the standard normal distribution.
The derivation of the confidence sets and intervals for $Q_\Pi$ and $A_\Pi$ are analogous.

Both the ellipsoidal confidence sets and the marginal confidence intervals above are first-order asymptotically valid. 
A systematic investigation of their higher-order accuracy and finite-sample performance—via Edgeworth expansions and potential improvements using model-based bootstrap—makes an interesting problem for future work.

\section{Conclusion}\label{sec:conclusion}

We develop the first asymptotic theory for \emph{non-parametric} estimation in finite CMCs. 
Our main result (Theorem~\ref{thm:eta-clt}) establishes a central limit theorem for the classical count–based estimator of state–action transition probabilities.
This non-trivially extends prior results to accommodate non-Markovian and non-stationary data, which renders classical regenerative techniques inapplicable.
Our proof introduces a new auxiliary sampling scheme that decouples temporal dependence while preserving the finite-state controlled dynamics, thereby extending the sampling construction of \citet{billingsley1961statistical} to the controlled and time-inhomogeneous setting.

Building on this foundation, we show that plug-in estimators of the value, Q-, and advantage functions under arbitrary stationary target policies—as well as the value of the estimated optimal policy—also satisfy CLTs.
All results hold under general, possibly history-dependent logging policies, providing a unified large-sample framework for model-based offline policy evaluation (OPE) and optimal policy recovery (OPR) in reinforcement learning.

Several directions for future work are possible. 
Analyzing the higher-order accuracy and finite-sample performance of the confidence sets and intervals for OPE and OPR derived from our asymptotic results would provide a deeper understanding of their practical reliability in offline reinforcement learning.
Refining the testability criteria to close the gap between the current sufficiency results and necessary impossibility statements would delineate precisely when asymptotic inference is attainable. 
Finally, extending the theory to infinite \citep{kontorovich_concentration_2008} or continuous state–action spaces, as suggested by the adaptive density estimation work in Markov settings \citep{sart_estimation_2014}, and to continuous-time controlled processes, would markedly broaden the scope of rigorous statistical inference of controlled Markov chains and its applications in reinforcement learning.

\newpage

\appendix
\section{Examples of Controlled Markov Chains}
\subsection{Inhomogeneous Markov Chains}
\label{sec:inhomo-chain}
We consider an inhomogeneous Markov chain as the first example. 
A controlled Markov chain is said to be an inhomogeneous Markov chain if there exists a sequence of actions $l_0,l_1,\ldots \in \mathcal A$ such that for any time $i$, state $s$, sample history $\history_0^{i-1}$, we have
\[
\prob(a_i=l_i|\History_0^{i-1}=\history_0^{i-1},X_i=s)=1.
\]
We make the following assumption on the logging policy.
\begin{assumption}
\label{ass:inhomo_MC}\
\begin{enumerate}
    \item Let $T_i = O\lp i^\alpha\rp$, $\alpha \in (0,1)$ be a sequence of known integers. 
Then, we assume that
\begin{align*}
    \sum_{p=j}^{T_i+j} \indicator[a_p=l]\geq1,\numberthis\label{eq:inhomogenous-eq2}
 \end{align*}
for all $\tau_{s,l} \pow{i-1} + 1\leq j\leq \tau_{s,l} \pow i-T_i$ and every $l\in \mathcal A$.
    \item There exists $M_{min}$ and $M_{max}$ such that for all $s,t\in \mathcal{S}$ and $l\in\mathcal{A}$,
    \begin{align}
    0<M_{min}\leq M_{s,t}\pow l\leq M_{max}<1.   \label{eq:I2}     
    \end{align}
\end{enumerate}    
\end{assumption}
\begin{proposition}~\label{prop:inhomogenous-mc}
Let $\indexeddata$ be a sample from an inhomogeneous Markov chain satisfying Assumption~\ref{ass:inhomo_MC}. Then Theorem~\ref{thm:eta-clt} holds with
\[
C=0, \ C_{\theta}= e/(e-1).
\]
\end{proposition}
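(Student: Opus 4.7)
The plan is to verify that inhomogeneous Markov chains satisfying Assumption~\ref{ass:inhomo_MC} meet the hypotheses of Theorem~\ref{thm:eta-clt}. By Lemma~\ref{lemma:delta-bound}, Assumption~\ref{ass:eta-mix} is implied by the pair of mixing conditions Assumptions~\ref{assume:control-mixing} and \ref{assume:chain-mix}. Thus it suffices to verify three items: the return-time growth condition, the action-mixing bound with $C=0$, and the state-mixing bound with $C_{\theta}=e/(e-1)$. The chain of implications then gives Theorem~\ref{thm:eta-clt} immediately.

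First, I would handle Assumption~\ref{assume:control-mixing}. Because the actions in an inhomogeneous Markov chain are deterministic functions of time ($a_p=l_p$ almost surely), the conditional law of $a_p$ given $X_p$ and any slice of the past history is the point mass $\delta_{l_p}$, which is independent of the history. Hence $\gamma_{p,j,i}=0$ for every $p,j,i$, and the summability condition holds trivially with $C=0$. This step is essentially a bookkeeping exercise and requires no new ideas.

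Next, I would verify Assumption~\ref{ass:return-time-growth} by adapting the calculation in Example~\ref{exp:return-time-growth}. The window condition \eqref{eq:inhomogenous-eq2} guarantees that every action $l\in\mathcal{A}$ appears in any window of length $T_i$ following the $(i-1)$-st return to $(s,l)$. Combined with the uniform lower bound $M^{(l)}_{s,t}\geq M_{\min}$, each successive block of $T_i$ steps has probability at least $M_{\min}$ of producing a visit to $(s,l)$. A standard geometric tail bound then gives
\[
\mathbb{E}\!\left[\tau_{s,l}^{(i)} \,\big|\, \mathcal{F}_{\sum_{p=1}^{i-1}\tau_{s,l}^{(p)}}\right]
\leq \sum_{k=1}^{\infty}(1-M_{\min})^{\lfloor k/T_i\rfloor -1} = O(T_i) = O(i^{\alpha}),
\]
exactly as in Example~\ref{exp:return-time-growth}.

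The main technical step---and the main obstacle---is verifying Assumption~\ref{assume:chain-mix} with the precise constant $C_{\theta}=e/(e-1)$. For an inhomogeneous Markov chain, $\bar\theta_{i,j}$ reduces to the TV distance between two distributions obtained by applying the product of deterministic time-indexed kernels $M^{(l_{i+1})}\cdots M^{(l_{j-1})}$ to different starting pairs $(s_1,l_1)$ and $(s_2,l_2)$. The lower bound $M^{(l)}_{s,t}\geq M_{\min}$ gives a per-step Dobrushin contraction, so $\bar\theta_{i,j}$ decays geometrically in $j-i$. The target value $e/(e-1)=\sum_{k=0}^{\infty}e^{-k}$ suggests a block argument: group steps into blocks whose aggregate contraction factor is at most $1/e$ (using also $M_{\max}<1$), and sum the resulting geometric series. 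Once this is established, Lemma~\ref{lemma:delta-bound} yields $\lVert \Delta_m\rVert \leq 0 + e/(e-1) + 1 = (2e-1)/(e-1)$, and Theorem~\ref{thm:eta-clt} applies directly. The difficulty is that a naive Dobrushin estimate gives a bound of the form $2/(dM_{\min})$ (as in Example~\ref{exp:ns-markov-controls}); obtaining the clean universal constant $e/(e-1)$ requires a careful choice of block length and exploitation of both $M_{\min}$ and $M_{\max}$ in the contraction bound.
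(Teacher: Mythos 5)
Your overall skeleton---verify Assumptions~\ref{ass:return-time-growth}, \ref{assume:control-mixing}, and \ref{assume:chain-mix}, then invoke Lemma~\ref{lemma:delta-bound} to obtain Assumption~\ref{ass:eta-mix} and hence Theorem~\ref{thm:eta-clt}---is exactly the paper's, and your treatment of the first two items is sound. The deterministic-action argument giving $C=0$ is the paper's verbatim, and your geometric-tail computation for the return time is if anything cleaner than the paper's, which detours through an auxiliary quantity $\widetilde T_i$ before arriving at the same $O(i^\alpha)$ bound.

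The gap is in the third step, which you flag as the main obstacle but do not close. The block argument you sketch cannot produce the constant $e/(e-1)$: if each block of length $B$ contracts total variation by a factor $1/e$, then $\bar\theta_{i,j}\le e^{-\lfloor (j-i)/B\rfloor}$ and the sum over $j>i$ is bounded only by $B\cdot e/(e-1)$, which collapses to $e/(e-1)$ only when $B=1$, i.e., only when a \emph{single} step already contracts by $e^{-1}$. The one-step Dobrushin coefficient guaranteed by Assumption~\ref{ass:inhomo_MC} is $1-dM_{\min}$, which need not be $\le e^{-1}$, so no choice of block length rescues the universal constant, and exploiting $M_{\max}<1$ does not change this. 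The paper instead cites \citet[Theorem 1]{wolfowitz1963products} on weak ergodicity of products of positive stochastic matrices to assert $\bar\theta_{i,j}\le e^{-C(j-i)}$ for some integer $C\ge 1$, whence $\sum_{j>i}\bar\theta_{i,j}\le \sum_{m\ge 1}e^{-m}=1/(e-1)\le e/(e-1)$. If you want to avoid that citation you would need a genuine per-step (not per-block) contraction of at least $e^{-1}$, which does not follow from the stated hypotheses; as written, your proposal leaves the claimed value of $C_\theta$ unestablished.
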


\begin{proof}
We begin our proof by verifying Assumption~\ref{ass:return-time-growth}.
For any $k>i$, (\ref{eq:I2}) implies that
    \begin{align*}
    \prob\lp \tau_{s,l}\pow{i}>k | \Fcal_{\sum_{p=1}^{i-1} \tau_{s,l}\pow{p}} \rp  & =\prob\lp \lc X_j\neq s\bigcup a_j\neq l \rc \forall j\in\{i+1,\dots,k+i\}|X_i=s,a_i=l \rp\\
    & \leq \lp1-M_{min}\rp^{\lfloor\frac{k}{T_i}\rfloor}\\
    & \leq \lp1-M_{min}\rp^{\frac{k}{T_i}-1}.
\end{align*}
Thus, 
\begin{align*}
    \expec[\tau_{s,l} \pow i | \Fcal_{\sum_{p=1}^{i-1}\tau_{s,l}\pow{p}} ]=\sum_{k=1}^\infty \prob\lp\tau_{s,l}^{(i)}>k |\Fcal_{\sum_{p=1}^{i-1} \tau_{s,l}\pow{p}}\rp&\leq \frac{\lp1-M_{min}\rp^{\frac{1}{T_i}-1}}{\lp 1-\lp1-M_{min}\rp^{\frac{1}{T_i}}\rp}. \numberthis\label{eq:inhomogenous-eq3}
\end{align*}
As $T_i = O(i^\alpha)$, there exists a constant $c>0$ such that $T_i \leq c i^\alpha$ for sufficiently large $i$.

Let $q := 1 - M_{min}$, $\widetilde T_i := \nicefrac{\log q}{\log\lp \nicefrac{c i^\alpha}{c i^\alpha - \log q} \rp}$.
Then 
\begin{align*}
    \widetilde T_i & = \frac{\log q}{\log\lp \frac{c i^\alpha}{c i^\alpha - \log q} \rp} \\
    &= - \frac{\log q}{\log \lp 1 - \frac{\log q}{c i^\alpha}\rp} \numberthis \label{eq:inhomogeneous-eq4}
\end{align*}
By the Taylor expansion of $\log(1+x)$, $\log(1+x) = x - o(x)$ as $x \to 0^+$. 
Then for sufficiently large $i$, the right-hand side of (\ref{eq:inhomogeneous-eq4}) becomes
\begin{align*}
     \widetilde T_i   &= - \frac{\log q}{-\frac{\log q}{c i^\alpha} - o(1/i^\alpha)} \\
    &= \frac{-c i^\alpha \log q}{-\log q - o(1)} \\
    & \geq \frac{-c i^\alpha \log q}{-\log q} \\
    &= c i^\alpha = T_i.
\end{align*}
Thus, $- \nicefrac{1}{\widetilde T_i} \geq - \nicefrac{1}{T_i}$, and $q^{-\nicefrac{1}{\widetilde T_i}} \leq q^{-\nicefrac{1}{T_i}}$.
Therefore,
\begin{align*}
    \frac{q^{\frac{1}{T_i}-1}}{1- q^{\frac{1}{T_i}}} &= \frac{q^{-1}}{ q^{-\frac{1}{T_i}} - 1} \leq \frac{ q^{-1}}{q^{-\frac{1}{\widetilde T_i}} - 1} = \frac{q^{\frac{1}{\widetilde T_i}-1}}{1-q^{\frac{1}{\widetilde T_i}}}.
\end{align*}
Substituting this value into the right hand side of (\ref{eq:inhomogenous-eq3}), we get
\begin{align*}
    \expec[\tau_{s,l}^{(i)}] & \leq \frac{q^{\frac{1}{\widetilde T_i}-1}}{1-q^{\frac{1}{\widetilde T_i}}} \\
     &= \frac{q^{\frac{\log\lp \frac{c i^\alpha}{c i^\alpha - \log q} \rp}{\log q}-1}}{1-q^{\frac{\log\lp \frac{c i^\alpha}{c i^\alpha - \log q} \rp}{\log q}}}\\
    &=\frac{\frac{c i^\alpha}{c i^\alpha - \log q}}{q-\frac{q c i^\alpha}{c i^\alpha - \log q}}\\
    &= \frac{c}{-q \log q} i^\alpha.
\end{align*}
Hence, Assumption~\ref{ass:return-time-growth} holds.

Now we verify Assumptions~\ref{assume:control-mixing} and \ref{assume:chain-mix}.
As the controls are deterministic, the total variation distance 
\begin{align*}
\gamma_{p,j,i} & = 
    \sup_{s_p, \history_{i+j}^{p-1}, \history_0^i} 
    \Big\lVert 
        \probl\lp 
            a_p \Big| X_p = s_p, \History_{i+j}^{p-1} = \history_{i+j}^{p-1}, \History_0^i = \history_{0}^i
        \rp - 
        \probl\lp  
            a_p \Big| X_p = s_p, \History_{i+j}^{p-1} = \history_{i+j}^{p-1}
        \rp
    \Big\rVert_{TV} \\
    & \equiv 0.
\end{align*}
Therefore, Assumption~\ref{assume:control-mixing} holds for all inhomogeneous Markov chains with $C=0$. Observe from the definition of $\bar \theta_{i,j}$ in (\ref{eq:def-theta}) that,
\begin{align*}
\bar \theta_{i,j} =\sup_{l,l',s_1,s_2} \nrm{\probl\lp X_j|X_i=s_1,a_i=l \rp-\probl\lp X_j|X_i=s_2,a_i=l'\rp}_{TV}.
\end{align*}
From the definition of an inhomogenous Markov chains it follows that 
\begin{align*}
    \sup_{s_1, s_2} \bigg\| 
    & \probl \lp X_j \mid X_i = s_1, a_i = l_1\rp 
    - \probl \lp X_j \mid X_i = s_2, a_i = l_2\rp \bigg\|_{TV} \\
    = \sup_{s_1, s_2} \bigg\| 
    & \probl \lp X_j \mid X_i = s_1, (a_{j-1}, \dots, a_i) = (l_{j-1}, \dots, l_i)\rp \\
    - & \probl \lp X_j \mid X_i = s_2, (a_{j-1}, \dots, a_i) = (l_{j-1}, \dots, l_i)\rp 
    \bigg\|_{TV}
\end{align*}
where the last line follows due the controls being deterministic. 

Since all transition matrices are positive, an application of \citet[Theorem 1]{wolfowitz1963products} implies that there exists an integer $C$ for which, 
\[
\bar \theta_{i,j}\leq e^{-C(j-i)}.
\]
Since $e^{-C(j-i)}<e^{-(j-i)}$ for any integer $C$, it consequently implies that,
\[
\sup_{i\geq 1}\sum_{j >i} \bar \theta_{i,j}\leq \frac{e^{-1}}{1-e^{-1}}\leq \frac{1}{1-e^{-1}} = \frac{e}{e-1}. 
\]
Therefore, Assumption~\ref{assume:chain-mix} holds with $C_\theta = \nicefrac{e}{e-1}$. 
Then, by \citet[Lemma 3]{banerjee2025off}, Assumption~\ref{ass:eta-mix} holds. 
As both Assumptions~\ref{ass:return-time-growth} and \ref{ass:eta-mix} hold, Theorem~\ref{thm:eta-clt} holds.
This completes the proof.
\end{proof}
\subsection{Controlled Markov Chains with Non-Stationary Markov Controls}
\label{sec:non-stat-chain}
We consider a controlled Markov chain with non-stationary control process as the second example.
A controlled Markov chain is said to have non-stationary Markov controls if for any time $i$, state $s$, action $l$, and sample history $\history_0^{i-1}$,
\[
\probl\lp a_i=l|X_i=s,\History_0^{i-1}=\history_0^{i-1}\rp=\probl\lp a_i=l|X_i =s\rp.
\]
We observe that an action can depend on the time $i$. 
Let $P_{s,l}^{(i)} := \prob(a_i=l|X_i=s)$.
We can write the transition probability of the state-action pair as
\begin{align*}
    &\prob\lp X_i=t,a_i=l'|X_{i-1}=s,a_{i-1}=l\rp\\
     = \, & \prob\lp X_i=t|X_{i-1}=s,a_{i-1}=l\rp\prob(a_i=l'|X_i=t)
     \\
     = \, & M_{s,t}^{(l)}\cdot P_{t,l'}^{(i)}.
\end{align*}
It is straightforward to see that the state-action pair is a time inhomogeneous Markov chain with transition probabilities given by $M_{s,t}^{(l)} P_{s,l'}^{(i)}$. 
The goal is to perform statistical inference on the transition probabilities $\prob(X_i=t|X_{i-1}=s,a_{i-1}=l')$. 
We proceed by making assumptions on the return times of the actions.
\begin{definition}
We define $\tau_{s,l}\pow{i,\star,j}$ to be the time between the $(j-1)$-th and $j$-th visit to action $l$ after visiting state-action pair $s,l$ for the $i$-th time. 
For ease of notation, let $ \sum_{k=1}^i\tau_{s,l}\pow{k}+\sum_{k=1}^{j-1}\tau^{(i,\star,k)}_{s,l}=\tau_\star$. We can then define $\tau_{s,l}\pow{i,\star,j}$ recursively as
$\tau_{s,l}\pow{i,\star,j}:= \min\{n > 0: a_{\tau_\star+n}=l,a_{j}\neq l, \ \forall \ \tau_\star<j<\tau_\star+n\}.
$
\end{definition}
Next we make some simplifying assumptions on $\tau_{s,l}\pow{i,\star,j}$ and $M\pow{l}$.
\begin{assumption}~\label{ass:markov}
\begin{enumerate}
    \item For all state-action pairs $(s,l) \in \mathcal{S}\times\mathcal{A}$, there exists $\alpha \in (0, 1)$ such that
    \[ \expec[\tau_{s,l}\pow{i,\star,j}|\Fcal_{\sum_{p=1}^{i-1} \tau_{s,l}\pow{p}+\sum_{p=1}^{j-1}\tau_{s,l}\pow{i,\star,p}}] \leq T_i^\star  \text{ almost surely, with }T_i^\star = O(i^\alpha).
    \]
    \item There exists $M_{min}$ and $M_{max}$ such that for all $s,t\in \mathcal{S}$ and $l\in\mathcal{A}$,
    \begin{align}
    0<M_{min}\leq M_{s,t}\pow l\leq M_{max}<1.   \label{eq:markov-eq1}     
    \end{align}
\end{enumerate}
\end{assumption}
The next lemma proves that under this assumption $\{(X_i,a_i)\}$ satisfies Assumption \ref{ass:return-time-growth}.
\begin{lemma}\label{lemma:return-time-markov}
Under the conditions of Assumption \ref{ass:markov}, for all $(i,s,l)\in\naturalset\times\mathcal{S}\times\mathcal{A}$, it holds almost surely that

    \begin{align*}
    \expec[\tau_{s,l}^{(i)}|\Fcal_{\sum_{p=1}^{i-1} \tau_{s,l}\pow{p}}] \leq \frac{T_i^\star M_{max}}{(1-\max\{M_{max},1-M_{min}\})^2}.\numberthis\label{eq:markov-eq3}         
    \end{align*}

\end{lemma}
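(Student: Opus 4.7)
The plan is to reduce $\tau_{s,l}^{(i)}$ to a sum of the inter-arrival times $\tau_{s,l}^{(i,\star,j)}$ of action $l$ (which are controlled by Assumption~\ref{ass:markov}(1)) and to bound the random number of such blocks using the transition bounds from Assumption~\ref{ass:markov}(2). Concretely, let $N_i$ denote the number of $l$-actions that occur strictly after the $(i-1)$-th visit to $(s,l)$ and up to and including the $i$-th such visit; then by construction
\[
\tau_{s,l}^{(i)} \;=\; \sum_{j=1}^{N_i} \tau_{s,l}^{(i,\star,j)}.
\]

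The event $\{N_i \geq j\}$ coincides with ``the states at the first $j-1$ intermediate $l$-actions all differ from $s$,'' and is therefore measurable with respect to $\Fcal_{\sum_{p=1}^{i-1}\tau_{s,l}^{(p)}+\sum_{p=1}^{j-1}\tau_{s,l}^{(i,\star,p)}}$. Combining this measurability with the tower property and Assumption~\ref{ass:markov}(1) yields, after summing in $j$, a Wald-type inequality
\[
\expec\!\left[\tau_{s,l}^{(i)} \,\Big|\, \Fcal_{\sum_{p=1}^{i-1}\tau_{s,l}^{(p)}}\right] \;\leq\; T_i^\star \cdot \expec\!\left[N_i \,\Big|\, \Fcal_{\sum_{p=1}^{i-1}\tau_{s,l}^{(p)}}\right].
\]

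The remaining task is to prove $\expec[N_i \mid \cdot] \leq M_{max}/(1-\beta)^2$, where $\beta := \max\{M_{max}, 1-M_{min}\}$. Let $Y_j := X_{T_j}$ be the state at the $j$-th intermediate $l$-action time, so that $N_i = \min\{j \geq 1 : Y_j = s\}$. Because every entry of $M^{(l)}$ lies in $[M_{min}, M_{max}]$ by Assumption~\ref{ass:markov}(2), any single $l$-step transitions to $s$ with probability at least $M_{min}$ and to any non-$s$ state with probability at most $M_{max}$. A two-step analysis---one $l$-step to move into $s$, followed by one more $l$-step whose failure probability is controlled by $\beta=\max\{M_{max},1-M_{min}\}$---yields a per-cycle success probability of at least $(1-\beta)^2/M_{max}$ for the hitting event $\{Y_j=s\}$. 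Summing the resulting geometric tail of $N_i$ gives the desired bound, and combining with the Wald-type inequality concludes the proof.

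The main obstacle is the bound on $\expec[N_i]$. The sequence $(Y_j)$ is not itself a Markov chain: between consecutive $l$-actions the state evolves through possibly many non-$l$ transitions, and the quantity $P_{s,l}^{(t)}=\prob(a_t=l\mid X_t=s)$ is not assumed to be bounded below, so one cannot simply argue that ``once at state $s$, the very next action is $l$ with positive probability.'' The argument must therefore combine the uniform positivity of $M^{(l)}$ with the bounded expected $l$-return from Assumption~\ref{ass:markov}(1) to rule out systematic avoidance of $s$ during non-$l$ excursions, which is precisely why both $M_{min}$ and $M_{max}$ enter the final bound.
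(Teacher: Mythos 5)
Your skeleton matches the paper's: both decompose the return time as $\tau_{s,l}\pow{i} = \sum_{j=1}^{N_i}\tau_{s,l}\pow{i,\star,j}$, use the measurability of $\lc N_i\geq j\rc$ with respect to $\Fcal_{\sum_{p=1}^{i-1}\tau_{s,l}\pow{p}+\sum_{k=1}^{j-1}\tau_{s,l}\pow{i,\star,k}}$ together with the tower property and Assumption~\ref{ass:markov}(1) to extract a factor $T_i^\star$ per block, and then must control the number of blocks. Your Wald-type packaging $\expec[\tau_{s,l}\pow{i}\mid\cdot]\leq T_i^\star\,\expec[N_i\mid\cdot]$ is a clean rearrangement of the paper's ``Term 1 $+$ Term 2'' computation and is fine.

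The gap is precisely the step you flag as the main obstacle: the bound $\expec[N_i\mid\cdot]\leq M_{max}/(1-\beta)^2$ with $\beta:=\max\lc M_{max},1-M_{min}\rc$. The ``two-step analysis'' yielding a ``per-cycle success probability of at least $(1-\beta)^2/M_{max}$'' is not an argument; it is the target constant read backwards, and no mechanism is given by which two $l$-steps produce that probability (a lower bound on a success probability should involve $M_{min}$, not $M_{max}$ in the numerator). What the paper actually does is bound the \emph{distribution} of the block count directly from Assumption~\ref{ass:markov}(2): conditionally on the past, every one-step transition lands in $s$ with probability in $[M_{min},M_{max}]$, so the state at each intermediate $l$-time differs from $s$ with conditional probability at most $\beta$ and equals $s$ with conditional probability at most $M_{max}$; chaining gives $\prob(N_i=j\mid\cdot)\leq M_{max}\beta^{j-1}$, and $\sum_{j\geq 1} j\,T_i^\star M_{max}\beta^{j-1}=T_i^\star M_{max}/(1-\beta)^2$. (Equivalently, $\prob(N_i\geq j\mid\cdot)\leq\beta^{j-1}$ gives $\expec[N_i\mid\cdot]\leq 1/(1-\beta)$, which already implies the claim since $1-\beta\leq M_{min}\leq M_{max}$.) Your closing worry---that $P_{s,l}^{(t)}$ has no lower bound and that Assumption~\ref{ass:markov}(1) must be invoked to rule out ``systematic avoidance of $s$''---points at a genuine subtlety (conditioning on the times at which action $l$ is played can in principle bias the state distribution at those times), but it is not the mechanism used to close the argument: Assumption~\ref{ass:markov}(1) only bounds block \emph{lengths}, and the geometric decay of the block \emph{count} comes entirely from the uniform positivity of the transition matrices. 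As written, your proposal identifies the right quantity to bound but does not supply a proof of the bound.
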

\begin{proof}
    We begin by observing that
\begin{align*}
  \tau_{s,l}\pow{i+1}
  =\begin{cases}
      \tau_{s,l}\pow{i,\star,1}
      & \text{if } 
      X_{\sum_{p=1}^i \tau_{s,l}\pow{p}+ \tau_{s,l}\pow{i,\star,1}} = s,
      \\[6pt]
      \displaystyle \sum_{k=1}^j \tau_{s,l}\pow{i,\star,k}
      & \text{if }
        \begin{aligned}[t]
          &X_{\sum_{p=1}^i \tau_{s,l}\pow{p}+ \sum_{k=1}^{m}
             \tau_{s,l}\pow{i,\star,k}} \neq s,
          &&\forall\, 1 \le m \le j
          \\[2pt]
          &\text{and }\;
          X_{\sum_{p=1}^i \tau_{s,l}\pow{p}
             + \sum_{k=1}^j \tau_{s,l}\pow{i,\star,k}} = s.
        \end{aligned}
    \end{cases}
\end{align*}
Therefore, 
\begin{align*}
    \tau_{s,l}\pow{i+1}
    &= \tau_{s,l}\pow{i,\star,1}\,
       \indicator\lb
         X_{\sum_{p=1}^i \tau_{s,l}\pow{p}+ \tau_{s,l}\pow{i,\star,1}} = s
       \rb \\[4pt]
    &\quad
      + \sum_{j=1}^\infty \sum_{k=1}^j \tau_{s,l}\pow{i,\star,k}\,
        \indicator\lb
          \begin{aligned}[c]
            &X_{\sum_{p=1}^i \tau_{s,l}\pow{p}
                + \sum_{k=1}^{m}\tau_{s,l}\pow{i,\star,k}} \neq s,
                &&\forall m \in \{1,\ldots,j\},\\
            &X_{\sum_{p=1}^i \tau_{s,l}\pow{p}
                + \sum_{k=1}^{j}\tau_{s,l}\pow{i,\star,k}} = s
          \end{aligned}
        \rb,\\
\intertext{which in turn implies that}
    &\expec[\tau_{s,l}\pow{i+1}\mid \Fcal_{\sum_{p=1}^{i-1}\tau_{s,l}\pow{p}}] \\[2pt]
    &=
      \expec\!\Bigl[
        \tau_{s,l}\pow{i,\star,1}\,
        \indicator\lb
          X_{\sum_{p=1}^i \tau_{s,l}\pow{p}+ \tau_{s,l}\pow{i,\star,1}} = s
        \rb
        \,\Bigm|\,
        \Fcal_{\sum_{p=1}^{i-1}\tau_{s,l}\pow{p}}
      \Bigr] \\[4pt]
    &\quad
      + \sum_{j=1}^\infty \sum_{k=1}^j
        \expec\!\Bigl[
          \tau_{s,l}\pow{i,\star,k}\,
          \indicator\lb
            \begin{aligned}[c]
              &X_{\sum_{p=1}^i \tau_{s,l}\pow{p}
                  + \sum_{k=1}^{m}\tau_{s,l}\pow{i,\star,k}} \neq s,
                  &&\forall m \in \{1,\ldots,j\},\\
              &X_{\sum_{p=1}^i \tau_{s,l}\pow{p}
                  + \sum_{k=1}^j \tau_{s,l}\pow{i,\star,k}} = s
            \end{aligned}
          \rb
          \,\Bigm|\,
          \Fcal_{\sum_{p=1}^{i-1}\tau_{s,l}\pow{p}}
        \Bigr] \\[4pt]
    &= \text{Term 1} + \text{Term 2}.
    \numberthis \label{eq:ret-t-markeq4}
\end{align*}

We now analyze the terms one by one. 

\noindent\textbf{Term 1: } The law of conditional expectation implies that
\begin{align*}
    & \expec\lb\tau_{s,l}\pow{i,\star,1}\indicator\lb X_{\sum_{p=1}^i \tau_{s,l}\pow{p}+ \tau_{s,l}\pow{i,\star,1}}=s\rb|\Fcal_{\sum_{p=1}^{i-1} \tau_{s,l}\pow{p}}\rb \\
     = \ & \expec\lb \expec\lb\tau_{s,l}\pow{i,\star,1}\indicator\lb X_{\sum_{p=1}^i \tau_{s,l}\pow{p}+ \tau_{s,l}\pow{i,\star,1}}=s\rb\gn\tau_{s,l}\pow{i,\star,1} \rb|\Fcal_{\sum_{p=1}^{i-1} \tau_{s,l}\pow{p}} \rb\\
    = \ & \expec\lb\tau_{s,l}\pow{i,\star,1}\prob\lp X_{\sum_{p=1}^i \tau_{s,l}\pow{p}+ \tau_{s,l}\pow{i,\star,1}}=s| \tau_{s,l}\pow{i,\star,1} \rp|\Fcal_{\sum_{p=1}^{i-1} \tau_{s,l}\pow{p}}\rb. \numberthis\label{eq:ret-t-markeq2}
\end{align*}
Recall from (\ref{eq:markov-eq1}) that 
\[
 \max_{s,t,l} M_{s,t}^{(l)}=M_{max}, \text{ and } \min_{s,t,l} M_{s,t}^{(l)}=M_{min}
\]
for two numbers $0<M_{min},M_{max}<1$. 
Then for any time $p$, state $s$, and history $\history_0^{p-1}$,
\begin{align*}
    &M_{min}\leq \prob\lp X_p=s\gn \History_0^{p-1}=\history_0^{p-1}\rp\leq M_{max}, \, \, \text{and } \numberthis \label{eq:E1}\\
    & \prob\lp X_p\neq s\gn \History_0^{p-1}=\history_0^{p-1}\rp\leq M_{opt} := \max\lc  M_{max},1-M_{min}\rc. \numberthis \label{eq:E2}
\end{align*}
It follows from (\ref{eq:E1}) that $\prob\lp X_{\sum_{p=1}^i \tau_{s,l}\pow{p}+ \tau_{s,l}\pow{i,\star,1}}=s| \tau_{s,l}\pow{i,\star,1} \rp\leq M_{max}$. 
Substituting this value in the right hand side of (\ref{eq:ret-t-markeq2}), we get the following upper bound to Term 1:
\[
 \expec\lb\tau_{s,l}\pow{i,\star,1}\indicator\lb X_{\sum_{p=1}^i \tau_{s,l}\pow{p}+ \tau_{s,l}\pow{i,\star,1}}=s\rb|\Fcal_{\sum_{p=1}^{i-1} \tau_{s,l}\pow{p}}\rb\leq \expec\lb \tau_{s,l}\pow{i,\star,1}M_{max}|\Fcal_{\sum_{p=1}^{i-1} \tau_{s,l}\pow{p}}\rb\leq T^\star_i M_{max},
\]
where the last inequality follows from the tower property.

\textbf{Term 2: }We define for ease of notation
\[
\expec^*[\cdot]=\expec[\cdot|\Fcal_{\sum_{p=1}^{i-1} \tau_{s,l}\pow{p}}],
\]
and proceed similarly as before to get
\begin{align*}
    &\expec^*\lb\tau_{s,l}\pow{i,\star,k} \indicator\lb             \begin{aligned}[c]
              &X_{\sum_{p=1}^i \tau_{s,l}\pow{p}
                  + \sum_{k=1}^{m}\tau_{s,l}\pow{i,\star,k}} \neq s,
                  &&\forall m \in \{1,\ldots,j\},\\
              &X_{\sum_{p=1}^i \tau_{s,l}\pow{p}
                  + \sum_{k=1}^j \tau_{s,l}\pow{i,\star,k}} = s
            \end{aligned}\rb\rb\\ 
     = & \ \expec^*\lb\tau_{s,l}\pow{i,\star,k}\prob\lp             \begin{aligned}[c]
              &X_{\sum_{p=1}^i \tau_{s,l}\pow{p}
                  + \sum_{k=1}^{m}\tau_{s,l}\pow{i,\star,k}} \neq s,
                  &&\forall m \in \{1,\ldots,j\},\\
              &X_{\sum_{p=1}^i \tau_{s,l}\pow{p}
                  + \sum_{k=1}^j \tau_{s,l}\pow{i,\star,k}} = s
            \end{aligned}| \tau_{s,l}\pow{i,\star,k}, k=1, \ldots, j \rp\rb.\numberthis\label{eq:ret-t-markeq3}
\end{align*}


The Bayes' theorem gives
\begin{align*}
& \prob\lp             \begin{aligned}[c]
              &X_{\sum_{p=1}^i \tau_{s,l}\pow{p}
                  + \sum_{k=1}^{m}\tau_{s,l}\pow{i,\star,k}} \neq s,
                  &&\forall m \in \{1,\ldots,j\},\\
              &X_{\sum_{p=1}^i \tau_{s,l}\pow{p}
                  + \sum_{k=1}^j \tau_{s,l}\pow{i,\star,k}} = s
            \end{aligned} | \tau_{s,l}\pow{i,\star,k}, k=1, \ldots, j \rp \\
 = \ &  \prob\lp X_{\sum_{p=1}^i \tau_{s,l}\pow{p}+ \sum_{k=1}^j \tau_{s,l}\pow{i,\star,k}}=s|
 \begin{aligned}[c]
    & \tau_{s,l}\pow{i,\star,k},  k=1, \ldots, j, \\
    & X_{\sum_{p=1}^i \tau_ {s,l}\pow{p}+  \sum_{k=1}^m \tau_{s,l}\pow{i,\star,k}}\neq s, \ \forall m \in \{1, \ldots, j\}
 \end{aligned}
 \rp
\\
 & \qquad \qquad \times \prob\lp X_{\sum_{p=1}^i \tau_{s,l}\pow{p}+  \sum_{k=1}^m \tau_{s,l}\pow{i,\star,k}}\neq s \ \forall m \in \{1, \ldots, j\}|\tau_{s,l}\pow{i,\star,k}, k=1, \ldots, j\rp \\
 = \ & \prob\lp X_{\sum_{p=1}^i \tau_{s,l}\pow{p}+ \sum_{k=1}^j \tau_{s,l}\pow{i,\star,k}}=s|\begin{aligned}[c]
    & \tau_{s,l}\pow{i,\star,k},  k=1, \ldots, j, \\
    & X_{\sum_{p=1}^i \tau_ {s,l}\pow{p}+  \sum_{k=1}^m \tau_{s,l}\pow{i,\star,k}}\neq s, \ \forall m \in \{1, \ldots, j\}
 \end{aligned}\rp\\
  & \qquad \qquad \times \prod_{m=1}^{j-1} \prob\lp X_{\sum_{p=1}^i \tau_{s,l}\pow{p}+  \sum_{k=1}^m \tau_{s,l}\pow{i,\star,k}}\neq s |\tau_{s,l}\pow{i,\star,k}, k=1, \ldots, j\rp. \numberthis \label{eq:E3}\\
\end{align*}
Using (\ref{eq:E1}) and (\ref{eq:E2}) to bound the first and the second term from (\ref{eq:E3}) we get
\begin{align*}
    \prob\lp \begin{aligned}
        & X_{\sum_{p=1}^i \tau_{s,l}\pow{p}+  \sum_{k=1}^m \tau_{s,l}\pow{i,\star,k}}\neq s, \ \forall m \in \{1, \ldots, j\} \text{ and } \\ & X_{\sum_{p=1}^i \tau_{s,l}\pow{p}+ \sum_{k=1}^j \tau_{s,l}\pow{i,\star,k}}=s
    \end{aligned} | \tau_{s,l}\pow{i,\star,k}, k=1, \ldots, j \rp & \leq   M_{max}M_{opt}^{j-1}.
\end{align*}
Substituting this value in the right hand side of (\ref{eq:ret-t-markeq3}), we get
\begin{align*}
   & \expec^*\lb\tau_{s,l}\pow{i,\star,k} \indicator\lb X_{\sum_{p=1}^i \tau_{s,l}\pow{p}+  \sum_{k=1}^m \tau_{s,l}\pow{i,\star,k}}\neq s, \ \forall m \in \{1, \ldots, j\} \text{ and }  X_{\sum_{p=1}^i \tau_{s,l}\pow{p}+ \sum_{k=1}^j \tau_{s,l}\pow{i,\star,k}}=s\rb\rb \\
   \leq & \ \expec^*[\tau_{s,l}\pow{i,\star,k}] M_{max}M_{opt}^{j-1} \\
    \leq & \  T_\star^i M_{max}M_{opt}^{j-1},
\end{align*}
where the last inequality follows from the tower property. 
Substituting the obtained upper bounds of Term 1 and Term 2 back into (\ref{eq:ret-t-markeq4}), we get
\begin{align*}
    \expec\lb\tau_{s,l}\pow{i}|\Fcal_{\sum_{p=1}^{i-1} \tau_{s,l}\pow{p}}\rb 
    & \leq \sum_{j=1}^\infty jT_\star^i M_{max} M_{opt}^{j-1}\\
    & = \frac{T_\star^i M_{max}}{1-M_{opt}} \sum_{j=1}^\infty j(1-M_{opt})M_{opt}^{j-1}\\
    & = \frac{T_\star^i M_{max}}{(1-M_{opt})^2}.
\end{align*}
\end{proof}


We can now state our main result about the CLT of a controlled Markov chain with a non-stationary Markov controls. 
\begin{proposition}\label{prop:markov-mdp}
Let $\indexeddata$ be a sample from a controlled Markov chain with non-stationary Markovian controls satisfying Assumption~\ref{ass:markov}, then Theorem~\ref{thm:eta-clt} holds with
\[
C = 0, \ C_\theta=1/(d M_{min}).
\]
\end{proposition}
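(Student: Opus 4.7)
The plan is to verify each of the three hypotheses of Theorem~\ref{thm:eta-clt}: the sublinear return-time bound (Assumption~\ref{ass:return-time-growth}) and the $\bar\eta$-mixing bound (Assumption~\ref{ass:eta-mix}). Since Lemma~\ref{lemma:delta-bound} reduces the second to Assumptions~\ref{assume:control-mixing} and \ref{assume:chain-mix}, it suffices to check three things: that return times of state–action pairs grow at most sublinearly, that $\gamma$-mixing of the actions is summable with constant $C=0$, and that $\bar\theta$-mixing of the states is summable with $C_\theta = 1/(d M_{\min})$.

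\textbf{Return times.} The first step is to invoke Lemma~\ref{lemma:return-time-markov} directly: it gives $\expec[\tau_{s,l}^{(i)} \mid \Fcal_{\sum_{p=1}^{i-1}\tau_{s,l}^{(p)}}] \leq T_i^\star M_{\max}/(1-\max\{M_{\max},1-M_{\min}\})^2$ almost surely, and since $T_i^\star = O(i^\alpha)$ by Assumption~\ref{ass:markov}, the right-hand side is itself $O(i^\alpha)$. Hence Assumption~\ref{ass:return-time-growth} holds.

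\textbf{Mixing of actions ($C=0$).} Because the controls are Markov on the current state, for any $p,j,i$ with $\prob(X_p=s_p,\History_{i+j}^{p-1}=\history_{i+j}^{p-1},\History_0^i=\history_0^i)>0$, both conditional laws
\[
\probl\lp a_p \mid X_p=s_p, \History_{i+j}^{p-1}=\history_{i+j}^{p-1}, \History_0^i=\history_0^i\rp \text{ and } \probl\lp a_p \mid X_p=s_p, \History_{i+j}^{p-1}=\history_{i+j}^{p-1}\rp
\]
collapse to $\probl(a_p \mid X_p = s_p)$ by the non-stationary Markov-control definition. Thus their total variation distance is zero, so $\gamma_{p,j,i} \equiv 0$ and $\sup_{i\geq 1}\sum_{j\geq 1}\sum_{p\geq i+j+1} \gamma_{p,j,i} = 0$, giving Assumption~\ref{assume:control-mixing} with $C=0$. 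This is the same argument already outlined in Example~\ref{exp:ns-markov-controls}.

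\textbf{Mixing of states ($C_\theta = 1/(d M_{\min})$).} The lower bound $M_{s,t}^{(l)} \geq M_{\min}$ from \cref{eq:markov-eq1} together with finiteness of $\mathcal S$ lets us invoke \citet[Lemma 12]{banerjee2025off}, which produces an exponential Doeblin-type contraction for $\bar\theta_{i,j}$ and summability with constant $1/(d M_{\min})$. This yields Assumption~\ref{assume:chain-mix} with $C_\theta = 1/(d M_{\min})$.

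\textbf{Conclusion.} Combining the preceding two items via Lemma~\ref{lemma:delta-bound} yields $\|\Delta_m\| \leq 0 + 1/(d M_{\min}) + 1$, which is a finite constant independent of $n$; this establishes Assumption~\ref{ass:eta-mix}. Since Assumptions~\ref{ass:return-time-growth} and \ref{ass:eta-mix} are both in force, Theorem~\ref{thm:eta-clt} applies with the claimed constants. The only nontrivial ingredient is the verification of the return-time bound, which is handled separately in Lemma~\ref{lemma:return-time-markov}; once that is in hand, the remainder is a direct, bookkeeping-level application of the general framework already developed in the paper.
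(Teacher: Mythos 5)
Your proposal is correct and follows essentially the same route as the paper's own proof: invoke Lemma~\ref{lemma:return-time-markov} to get Assumption~\ref{ass:return-time-growth}, observe $\gamma_{p,j,i}\equiv 0$ for Markov controls to get $C=0$, apply \citet[Lemma 12]{banerjee2025off} for $C_\theta = 1/(d M_{\min})$, and combine via Lemma~\ref{lemma:delta-bound}. The only difference is that you spell out the final $\|\Delta_m\|$ bound explicitly, which the paper leaves implicit.
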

\begin{proof}
    As Assumption~\ref{ass:markov} holds, Lemma~\ref{lemma:return-time-markov} holds. Then Assumption~\ref{ass:return-time-growth} holds. 
    We just need to verify Assumptions~\ref{assume:control-mixing} and \ref{assume:chain-mix}.

    As the controls are non-stationary Markov, the total variation distance
    \begin{align*}
\gamma_{p,j,i} & = 
    \sup_{s_p, \history_{i+j}^{p-1}, \history_0^i} 
    \Big\lVert 
        \probl\lp 
            a_p \Big| X_p = s_p, \History_{i+j}^{p-1} = \history_{i+j}^{p-1}, \History_0^i = \history_{0}^i
        \rp - 
        \probl\lp  
            a_p \Big| X_p = s_p, \History_{i+j}^{p-1} = \history_{i+j}^{p-1}
        \rp
    \Big\rVert_{TV} \\
    & \equiv 0.
\end{align*}
Therefore, Assumption~\ref{assume:control-mixing} holds for all controlled markov chains with non-stationary controls with $C=0$.

Recall from (\ref{eq:markov-eq1}), $M_{min} = \min_{s,t,l} M_{s,t} \pow l > 0$.
Then, by \citet[Lemma 12]{banerjee2025off}, Assumption~\ref{assume:chain-mix} holds with $C_\theta = 1/(d M_{min})$.
\end{proof}

\section{Statements and Proofs of the Propositions and Lemmas Used in Theorem~\ref{thm:eta-clt}}
\subsection{Statement and Proof of Lemma~\ref{lemma:consistency}}
\label{sec:lemma_consistency}
\begin{lemma}\label{lemma:consistency}
    Let $\indexeddata$ be a sample from a controlled Markov chain. Under Assumptions~\ref{ass:return-time-growth} and \ref{ass:eta-mix},
    \begin{align*}
       \frac{N_s\pow l}{\expec[N_s\pow l]}\xrightarrow{a.s.} 1. 
    \end{align*}
    Further suppose that Assumption~\ref{ass:semi-ergodic} holds, then $N_s \pow l$ is $1$-st order almost surely consistent for $n p_s \pow l$. 
    In other words,
    \begin{align*}
    \frac{N_s\pow l}{n}\xrightarrow{a.s.}  p_s \pow l.
    \end{align*}
\end{lemma}
\begin{proof}
        Using \citet[Lemma 6]{banerjee2025off}, for any $t > 0$, we have:
  \begin{align}
      \prob(|N_s\pow{l}-\expec[N_s\pow{l}]|>t)\leq 2\exp\lp-\frac{t^2}{2n\|\Delta_n\|^2}\rp,
  \end{align}
    where $\|\Delta_n\|=\underset{1\leq i\leq n}{\max} (1+ \Bar{\eta}_{i,i+1}+ \Bar{\eta}_{i,i+2}+\dots \Bar{\eta}_{i,n})$, and $\Bar{\eta}_{i,j}$ are the coefficients.
    Setting $t=\epsilon\expec[N_s\pow l]$ we get
    \begin{align}
      \prob\lp\lv\frac{N_s\pow{l}}{\expec[N_s\pow l]}-1\rv>\eps\rp\leq 2\exp\lp-\frac{\expec[N_s\pow l]^2\eps^2}{2n\|\Delta_n\|^2}\rp.
  \end{align}
  By Assumption~\ref{ass:eta-mix}, $\sup_n \|\Delta_n\|<\infty$. 
  As Assumption~\ref{ass:return-time-growth} holds, it follows from Proposition~\ref{prop:count-lower-bound} that $\expec[N_s\pow l]\geq cn^{\nicefrac{1}{1+\alpha}}$ for sufficiently large $n$, where $c>0$ is a constant, and we get $ \prob\lp\lv{N_s\pow{l}}/{\expec[N_s\pow l]}-1\rv>\eps\rp\leq2 \exp\lp- \nicefrac{c^2 \epsilon^2 n^{(1-\alpha)/(1+\alpha)}}{2 \|\Delta_n\|^2}\rp$.
  
  As $\alpha \in (0,1)$, the series $\sum_{n=1}^\infty 2 \exp\lp- \nicefrac{c^2 \epsilon^2 n^{(1-\alpha)/(1+\alpha)}}{2 \|\Delta_n\|^2}\rp$ converges.
  Observing that $\epsilon$ is arbitrary, using the Borel-Cantelli lemma, we get $\lv{N_s\pow{l}}/{\expec[N_s\pow l]}-1\rv\xrightarrow{\ a.s. \ }0$, hence $\nicefrac{N_s\pow l}{\expec[N_s\pow l]}\xrightarrow{a.s.} 1$. 

Using the Slutsky's theorem, we have $\nicefrac{N_s\pow l}{n }\xrightarrow{a.s.} p_s \pow l$ under Assumption~\ref{ass:semi-ergodic}. This completes the proof.
\end{proof}
\subsection{Statement and Proof of Proposition~\ref{prop:mod-sampling-scheme}}
\label{sec:prop_sampling}
\begin{proposition}\label{prop:mod-sampling-scheme}
   $\lp X_0,a_0,\dots,X_n,a_n\rp$ is identically distributed to $\lp \tilde X_0,\tilde a_0,\dots,\tilde X_n,\tilde a_n\rp$.
\end{proposition}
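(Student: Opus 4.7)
The plan is to proceed by induction on $i$, showing that for every $i \ge 0$ the joint law of $(\tilde X_0,\tilde a_0,\ldots,\tilde X_i,\tilde a_i)$ matches that of $(X_0,a_0,\ldots,X_i,a_i)$. The base case $i=0$ is immediate because the construction sets $(\tilde X_0,\tilde a_0) \stackrel{d}{=} (X_0,a_0)$. For the inductive step, assuming the equality in distribution at time $i$, it suffices to verify that the one-step conditional law of $(\tilde X_{i+1},\tilde a_{i+1})$ given the past coincides with the one-step conditional law of $(X_{i+1},a_{i+1})$ given $\mathcal H_0^i$. Once this is established, the tower property and the inductive hypothesis give the joint equality at level $i+1$.

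To handle the conditional law of $\tilde X_{i+1}$, I would factor it as follows. Fix a history $(s_0,l_0,\ldots,s_i,l_i)$. Conditionally on $(\tilde X_0,\tilde a_0,\ldots,\tilde X_i,\tilde a_i)=(s_0,l_0,\ldots,s_i,l_i)$, the visit counter $\tilde N_{s_i}^{(i,l_i)}+1$ is a deterministic function of this history; call its value $\tau^\star$. The key observation is that the array $\Xbb^{(l_i)}$ is constructed to be independent of the original data, and its entries are i.i.d.\ across $(s,\tau)$, so the random variable $X_{s_i,\tau^\star}^{(l_i)}$ is independent of the event we are conditioning on and has the prescribed law $\mathbb P(X_{s_i,\tau^\star}^{(l_i)}=t)=M_{s_i,t}^{(l_i)}$. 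Crucially, because the counter $\tilde N_{s_i}^{(i,l_i)}$ increments each time the pair $(s_i,l_i)$ is visited, the entry used at step $i+1$ has never been used at any previous step, so no reuse breaks the independence. This matches the CMC transition law in \cref{eq:CMC}.

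For $\tilde a_{i+1}$, the construction sets $\tilde a_{i+1}=\alpha_{i+1}^{(\tilde X_0,\tilde a_0,\ldots,\tilde X_{i+1})}$, which by the definition of $\alpha_{i+1}^{(\cdot)}$ has conditional distribution
\[
\mathbb P\!\lp \tilde a_{i+1}=l\,\big|\, (\tilde X_0,\tilde a_0,\ldots,\tilde X_{i+1})=(s_0,l_0,\ldots,s_{i+1})\rp
= \mathbb P\!\lp a_{i+1}=l\,\big|\, X_{i+1}=s_{i+1},\,\mathcal H_0^i=(s_0,l_0,\ldots,s_i,l_i)\rp,
\]
which agrees with the logging policy. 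Independence of the family $\{\alpha_j^{(\cdot)}\}$ from the arrays $\{\Xbb^{(l)}\}$ and from the base variables ensures that this one-step conditional is not contaminated by the randomness already used to draw $\tilde X_{i+1}$. Combining the two one-step conditionals with the inductive hypothesis via the chain rule for conditional distributions yields the joint equality at time $i+1$, completing the induction.

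The main obstacle I anticipate is bookkeeping: one must argue cleanly that, at the moment $\tilde X_{i+1}$ is drawn, the specific array entry $X_{\tilde X_i,\tilde N_{\tilde X_i}^{(i,\tilde a_i)}+1}^{(\tilde a_i)}$ is independent of the $\sigma$-algebra generated by $(\tilde X_0,\tilde a_0,\ldots,\tilde X_i,\tilde a_i)$ and by all previously used array entries and $\alpha$-variables. Formally this requires identifying the $\sigma$-algebra of history at step $i$ as being generated by a strict subset of the array and $\alpha$ variables (namely, those indexed by the counters realized so far), and invoking independence of the remaining entries. This can be made rigorous by an auxiliary lemma on grouped independence in product spaces, after which the two one-step conditional distribution calculations above are routine.
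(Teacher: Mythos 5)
Your proposal is correct and follows essentially the same route as the paper's proof: an induction on $i$ in which the joint probability is factored via the chain rule into the one-step conditionals for $\tilde X_{i+1}$ and $\tilde a_{i+1}$, using the fact that conditional on the realized history the counter $\tilde N_{s_i}^{(i,l_i)}+1$ is a fixed integer indexing a fresh, independent array entry. Your explicit attention to the independence bookkeeping (that the entry drawn at step $i+1$ has not been consumed earlier and is independent of the $\sigma$-algebra of the past) is the same observation the paper relies on when it drops the conditioning on the history.
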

\begin{proof}
We prove this fact by induction. 
Obviously, $(X_0,a_0)\overset{d}{=}(\tilde X_0,\tilde{a_0})$.
Now, for some $i\geq 1$, let $\lp X_0,a_0,\dots,X_i,a_i\rp$ be identically distributed to $\lp \tilde X_0,\tilde a_0,\dots,\tilde X_i,\tilde a_i\rp$. Then, for $i+1$, we note that,
\begin{align*}
    & \prob\lp \tilde X_{i+1}=s_{i+1},\tilde a_{i+1}=l_{i+1},\dots,\tilde X_0=s_0,\tilde a_0=l_0 \rp\\
    & \ =\prob\lp \tilde a_{i+1}=l_{i+1}| \tilde X_{i+1}=s_{i+1},\dots,\tilde X_0=s_0\rp\\ 
    &\quad \times\prob\lp \tilde X_{i+1}=s_{i+1}|\tilde X_i=s_i,\tilde a_{i}=l_i,\dots,\tilde a_0=l_0,\tilde X_0=s_0\rp\\
    &\quad \times \prob(\tilde X_i=s_i,\tilde a_i=l_i,\dots,\tilde X_0=s_0,\tilde a_0=l_0)\\
    & \ =\prob\lp  \alpha_i\pow{\tilde X_0,\tilde{a_0},\dots,\tilde X_{i+1}}=l_{i+1}| \tilde X_{i+1}=s_{i+1},\dots,\tilde X_0=s_0\rp\\
    &\quad \times \prob\lp X_{\Tilde{X_i},\Tilde{N}_{\Tilde{X_i}}^{(i,\tilde a_i)}+1}\pow{\tilde a_i}=s_{i+1}|\tilde X_i=s_i,\tilde a_{i}=l_i,\dots,\tilde a_0=l_0,\tilde X_0=s_0\rp\\
    &\quad \times \prob(\tilde X_i=s_i,\tilde a_i=l_i,\dots,\tilde X_0=s_0,\tilde a_0=l_0)\\
    & \ =\prob\lp  \alpha_i\pow{s_0,l_0,\dots,s_{i+1}}=l_{i+1}| \tilde X_{i+1}=s_{i+1},\dots,\tilde X_0=s_0\rp\\
    &\quad \times \prob\lp X_{s_i,\Tilde{N}_{s_i}^{(i,l_i)}+1}\pow{l_i}=s_{i+1}|\tilde X_i=s_i,\tilde a_{i}=l_i,\dots,\tilde a_0=l_0,\tilde X_0=s_0\rp\\
    &\quad \times \prob(\tilde X_i=s_i,\tilde a_i=l_i,\dots,\tilde X_0=s_0,\tilde a_0=l_0),
\end{align*}
where the equalities follow by substituting in the corresponding value of each quantity. Observe that under the given conditional, such that $\Tilde{N}_{s_i}^{(i,l_i)}+1$ is some fixed integer $n$. 
Then,
\begin{align*}
    \prob\lp  \alpha_i\pow{s_0,l_0,\dots,s_{i+1}}=l_{i+1}| \tilde X_{i+1}=s_{i+1},\dots,\tilde X_0=s_0\rp & = \prob\lp \alpha_i\pow{s_0,a_0,\dots,s_{i+1}}=l_{i+1}\rp\\
    & =  P_l\pow{s_i,l_{i-1},\dots,l_0,s_0} 
\end{align*}
and
\begin{align*}
    &  \prob\lp X_{s_i,\Tilde{N}_{s_i}^{(i,l_i)}+1}\pow{l_i}=s_{i+1}|\tilde X_i=s_i,\tilde a_{i}=l_i,\dots,\tilde a_0=l_0,\tilde X_0=s_0\rp\\
    &\quad \times \prob(\tilde X_i=s_i,\tilde a_i=l_i,\dots,\tilde X_0=s_0,\tilde a_0=l_0)\\
    &  \ =  \prob\lp X_{s_i,n}\pow{l_i}=s_{i+1}|\tilde X_i=s_i,\tilde a_{i}=l_i,\dots,\tilde a_0=l_0,\tilde X_0=s_0\rp\\
    &\quad \times \prob(\tilde X_i=s_i,\tilde a_i=l_i,\dots,\tilde X_0=s_0,\tilde a_0=l_0)\\
    &  \ =\prob\lp X_{s_i,n}\pow{l_i}=s_{i+1}\rp \prob(\tilde X_i=s_i,\tilde a_i=l_i,\dots,\tilde X_0=s_0,\tilde a_0=l_0)\\
    &\ = M_{s,t}\pow{l_i}\prob(\tilde X_i=s_i,\tilde a_i=l_i,\dots,\tilde X_0=s_0,\tilde a_0=l_0)\\
    &\ =M_{s,t}\pow{l_i}\prob(X_i=s_i, a_i=l_i,\dots,X_0=s_0, a_0=l_0),
\end{align*}
where the last equality follows by induction hypothesis. We finally get,
\begin{align*}
   & \prob\lp \tilde X_{i+1}=s_{i+1},\tilde a_{i+1}=l_{i+1},\dots,\tilde X_0=s_0,\tilde a_0=l_0 \rp \\
   & =  P_l\pow{s_i,l_{i-1},\dots,l_0,s_0} M_{s,t}\pow{l_i}\prob(X_i=s_i, a_i=l_i,\dots,X_0=s_0, a_0=l_0)
\end{align*}
which is the same as 
\begin{align*}
   &\prob\lp \tilde X_{i+1}=s_{i+1},\tilde a_{i+1}=l_{i+1},\dots,\tilde X_0=s_0,\tilde a_0=l_0 \rp \\ 
   &\ = \prob\lp  X_{i+1}=s_{i+1}, a_{i+1}=l_{i+1},\dots, X_0=s_0, a_0=l_0 \rp. 
\end{align*}
This completes the proof.
\end{proof}
Observe that Proposition \ref{prop:mod-sampling-scheme} does not require any assumption on the transition matrices $M$'s or the distribution of the actions $a_i$'s. However, it does require the finiteness structure of the CMC. It is unclear whether such an equivalent sampling schema exists for continuous state control processes and existing literature seems to rely on technically challenging adaptive estimation procedures (see Theorem 1 in \cite{banerjee2025adaptiveestimationtransitiondensity}). However, to the best of our knowledge, while adaptive estimation works well while deriving risk bounds \cite[etc.]{birge_model_2006,baraud_estimating_2009,baraud_rho-estimators_2018} and the techniques cannot be generalized to derive (functional) CLT's for the estimated transition densities. 
\subsection{Statement and Proof of Lemma~\ref{lemma:tight}}
\label{sec:lemma_tight}
\begin{lemma}\label{lemma:tight}
For each $(s,l,t) \in \mathcal S \times \mathcal A \times \mathcal S$,
    \begin{align*}
        \tilde \xi_{s,l,t} - \xi_{s,l,t} = \frac{\lp\tilde N_{s,t}\pow l - \lfloor \expec[N_s\pow l]\rfloor M_{s,t}\pow l \rp}{\sqrt{\expec[N_s\pow l]}} - \frac{\lp{ N_{s,t}\pow l-N_{s}\pow l M_{s,t}\pow l}\rp}{\sqrt{N_s\pow l}}\xrightarrow{\ p \ } 0.
    \end{align*}
\end{lemma}

\begin{proof}
    To show this, let $I\pow i:= \indicator[X_{s,i}\pow l=t]-M_{s,t}\pow l$ and define $S_n := \sum_{i=1}^n I\pow i$. We have
    \begin{align*}
       \frac{\lp\tilde N_{s,t}\pow l - \lfloor \expec[N_s \pow l]\rfloor M_{s,t}\pow l \rp}{\sqrt{n^{\frac{1}{1+\alpha}}}} - \frac{\lp{ N_{s,t}\pow l-N_{s}\pow l M_{s,t}\pow l}\rp}{\sqrt{n^{\frac{1}{1+\alpha}}}} & =  \frac{S_{\lfloor \expec[N_s \pow l] \rfloor}-S_{N_s \pow l}}{\sqrt{n^{\frac{1}{1+\alpha}}}}.
    \end{align*}
    Using Lemma \ref{lemma:consistency}, for all $\eps>0$ and $n$ sufficiently large, $\prob\lp \lv N_s \pow l -\lfloor \expec[N_s \pow l] \rfloor \rv> \sqrt{n^{\nicefrac{1}{1+\alpha}}}\epsilon\rp<\eps$. Therefore,
    \begin{align*}
    & \prob\lp \left| S_{\lfloor \expec[N_s \pow l] \rfloor} - S_{N_s \pow l} \right| > \sqrt{n^{\frac{1}{1+\alpha}}} \epsilon \rp \\
    \leq & \ \prob\lp \left| N_s \pow l - \lfloor \expec[N_s \pow l] \rfloor \right| > \sqrt{n^{\frac{1}{1+\alpha}}} \epsilon \rp
    \\ & \hspace{1in} + \prob\lp \left| N_s \pow l - \lfloor \expec[N_s \pow l] \rfloor \right| \leq \sqrt{n^{\frac{1}{1+\alpha}}} \epsilon, \left| S_{\lfloor \expec[N_s \pow l] \rfloor} - S_{N_s \pow l} \right| > \sqrt{n^{\frac{1}{1+\alpha}}} \epsilon \rp \\
    \leq & \ \epsilon + \prob\lp \left| N_s \pow l - \lfloor \expec[N_s \pow l] \rfloor \right| \leq \sqrt{n^{\frac{1}{1+\alpha}}} \epsilon, \left| S_{\lfloor \expec[N_s \pow l] \rfloor} - S_{N_s \pow l} \right| > \sqrt{n^{\frac{1}{1+\alpha}}} \epsilon \rp.
    \end{align*}
    Observe that $\indicator[X_{s,i}\pow a=t]$ and $M_{s,t}\pow a$ are between $0$ and $1$. Hence $\lv I \pow i\rv \leq 1$. Then 
    \[
    \prob\lp \left| N_s \pow l - \lfloor \expec[N_s \pow l] \rfloor \right| \leq \sqrt{n^{\nicefrac{1}{1+\alpha}}} \epsilon, \left| S_{\lfloor \expec[N_s \pow l] \rfloor} - S_{N_s \pow l} \right| > \sqrt{n^{\nicefrac{1}{1+\alpha}}} \epsilon \rp = 0,\] and 
    \[
    \prob\lp \left| S_{\lfloor \expec[N_s \pow l] \rfloor} - S_{N_s \pow l} \right| > \sqrt{n^{\nicefrac{1}{1+\alpha}}} \epsilon \rp \leq \epsilon.\] 
    Since $\epsilon$ is arbitrary, it now follows that
    \begin{align*}
        \frac{S_{\lfloor \expec[N_s \pow l] \rfloor}-S_{N_s \pow l}}{\sqrt{n^{\frac{1}{1+\alpha}}}}\xrightarrow{\ p\ }0.
    \end{align*}
    Next, observe that
    \begin{align*}
        & \frac{\lp\tilde N_{s,t}\pow l - \lfloor \expec[N_s \pow l]\rfloor M_{s,t}\pow l \rp}{\sqrt{\expec[N_s \pow l]}} - \frac{\lp{ N_{s,t}\pow l-N_{s}\pow l M_{s,t}\pow l}\rp}{\sqrt{N_s \pow l}}\\
        & \quad =\sqrt{\frac{n^{\frac{1}{1+\alpha}}}{\expec[N_s \pow l]}}\lp \frac{\lp\tilde N_{s,t}\pow l - \lfloor \expec[N_s \pow l]\rfloor M_{s,t}\pow l \rp}{\sqrt{n^{\frac{1}{1+\alpha}}}} - \frac{\lp{ N_{s,t}\pow l-N_{s}\pow l M_{s,t}\pow l}\rp}{\sqrt{n^{\frac{1}{1+\alpha}}}\lp\sqrt{{N_s \pow l}/{\expec[N_s \pow l]}}\rp}\rp.
    \end{align*}
    By Proposition~\ref{prop:count-lower-bound}, for sufficiently large $n$, $\expec[N_s \pow l] \geq \nicefrac{1}{c} n^{\nicefrac{1}{1+\alpha}}$, where $c > 0$ is a constant, and $\alpha \in (0,1)$. 
    By Lemma \ref{lemma:consistency}, $N_s \pow l/\expec[N_s \pow l]\xrightarrow{\ a.s. \ }1$. Therefore, $\sqrt{{n^{\nicefrac{1}{1+\alpha}}}/{\expec[N_s \pow l]}}\leq \sqrt{c}$ and 
    \begin{align*}
        & \frac{\lp\tilde N_{s,t}\pow l - \lfloor \expec[N_s \pow l]\rfloor M_{s,t}\pow l \rp}{\sqrt{\expec[N_s \pow l]}} - \frac{\lp{ N_{s,t}\pow l-N_{s}\pow l M_{s,t}\pow l}\rp}{\sqrt{N_s \pow l}} \\
        \leq & \ \sqrt{c}\lp \frac{\lp\tilde N_{s,t}\pow l - \lfloor \expec[N_s \pow l]\rfloor M_{s,t}\pow l \rp}{\sqrt{n^{\frac{1}{1+\alpha}}}} - \frac{\lp{ N_{s,t}\pow l-N_{s}\pow l M_{s,t}\pow l}\rp}{\sqrt{n^{\frac{1}{1+\alpha}}}\lp\sqrt{{N_s \pow l}/{\expec[N_s \pow l]}}\rp}\rp
        \xrightarrow{\ p \ }  0.
    \end{align*}
    This completes the proof.
\end{proof}

\section{Proof of Theorem \ref{thm:eta-clt}}
\label{sec:proof-eta-clt}
\begin{proof}
We begin by presenting the following generalization of the sampling scheme given in \cite{billingsley1961statistical} to the case of controlled Markov chains. 
For each $l \in \mathcal{A}$, create the following infinite array of i.i.d random variables which are also \emph{independent} of the data $\indexeddata$:
\begin{align}
    \Xbb^{(l)} : \left[ \begin{array}{ccccc}
        X_{1,1}^{(l)} & X_{1,2}^{(l)} & \dots & X_{1,\tau}^{(l)} & \dots \\
        X_{2,1}^{(l)} & X_{2,2}^{(l)} & \dots & X_{2,\tau}^{(l)} & \dots \\ 
         \dots &\dots &\dots &\dots & \dots \\
        X_{d,1}^{(l)} & X_{d,2}^{(l)} & \dots & X_{d,\tau}^{(l)} & \dots
    \end{array} \right],
    \nonumber
\end{align}
where, $\forall \ (s,t,\tau)\in \statespace \times \statespace \times \mathbb{N}$, the random variables $X_{s,\tau}^{(l)}$ follow the probability mass function given by $\prob (X_{s,\tau}^{(l)}=t)=M_{s,t}^{(l)}$.
Moreover, for every time $i\geq 1$, and $\lp(s_0,l_0),\dots,(s_{i-1},l_{i-1}),s_i\rp\in (\mathcal{S}\times\mathcal{A})^{i}\times \mathcal{S}$, let $\alpha_i\pow{(s_0,l_0),\dots,(s_{i-1},l_{i-1}),s_i}$ be independent random variables with support $\mathcal{A}$ and probability mass function given by
\begin{align*}
& P_l\pow{(s_0,l_0),\dots,(s_{i-1},l_{i-1}),s_i} \\
     := & \ \prob( \alpha_i\pow{(s_0,l_0),\dots,(s_{i-1},l_{i-1}),s_i}=l) \\
      = & \ \prob(a_i=l|X_i=s_i,\History_0^{i-1}=s_0,l_0,\dots,s_{i-1},l_{i-1}).
\end{align*}
The sampling scheme runs as follows. Set $(\tilde X_0,\tilde a_0)\overset{d}{=} (X_0,a_0)$ and recursively sample $\Tilde{X}_{i+1}= X_{\Tilde{X_i},\Tilde{N}_{\Tilde{X_i}}^{(i,\tilde a_i)}+1}\pow{\tilde a_i}$ from the array $\Xbb^{(\tilde a_{i})}$ and $\tilde a_{i+1}{=}\alpha_{i+1}\pow{\tilde X_0,\tilde{a_0},\dots,\tilde X_{i+1}}$, where each $i\geq 0$. 
Define $\tilde{N}_{s}\pow{i,l}:=\underset{j\leq i}{\sum}\indicator[\Tilde{X}_j=s,\tilde a_j=l]$ and $\tilde{N}_{s}\pow{l}:=\sum_{i=1}^{n} \indicator[\Tilde{X}_i=s,\tilde a_i=l]$, where $\tilde {N}_{s}\pow{n, l}=\tilde {N}_{s}\pow{l}$. 
This completes the sampling scheme.

Proposition~\ref{prop:mod-sampling-scheme} implies that $\tilde{N}_{s}\pow{l} \overset{d}{=} N_{s}\pow{l}$, hence $\expec[\tilde N_s\pow l]=\expec[N_s\pow l]$. 
Let $\tilde N_{s, t}\pow l$ be a random variable defined as 
\begin{equation}
\tilde N_{s, t}\pow l:=\sum_{\tau=1}^{\lfloor \expec[\tilde N_s\pow l] \rfloor} \mathbbm{1}\left[\tilde X_{i, \tau} \pow l=t \right].  
\end{equation}
Let the $kd^2$ dimensional vector $(\tilde \xi_{s,l,t})$ be defined element-wise as
\begin{align*}
\tilde \xi_{s,l,t}:= \frac{\lp\tilde N_{s,t}\pow l - \lfloor \expec[\tilde N_s\pow l]\rfloor M_{s,t}\pow l \rp}{\sqrt{\expec[\tilde N_s\pow l]}} = \frac{\lp\tilde N_{s,t}\pow l - \lfloor \expec[N_s\pow l]\rfloor M_{s,t}\pow l \rp}{\sqrt{\expec[N_s\pow l]}}.
\end{align*}
By definition, $\lp \tilde N_{s,1} \pow l, \ldots,   \tilde N_{s,d} \pow l\rp$ is the frequency count of $\{X_{s,1} \pow l, \ldots, X_{s,\lfloor \expec[N_s \pow l] \rfloor}\pow l\}$.
From the independence of $\{\mathbb X \pow l\}$ and the central limit theorem for multinomial trials, it follows that $\tilde \xi$ is distributed asymptotically normally with covariance matrix given by (\ref{eq:correlation}) and
    \begin{align*}
    \tilde \xi \overset{d}{\rightarrow} \Ncal(0,\Lambda).
    \end{align*}
Lemma~\ref{lemma:tight} implies that $\tilde \xi \xrightarrow{\ p \ } \xi$. Therefore, $\xi \overset{d}{\rightarrow} \Ncal(0,\Lambda)$. This completes the proof.

\end{proof}

\section{Proofs of CLTs for the Value, Q-, and Advantage Functions, and the Optimal Policy Value}
\subsection{Proof of Theorem \ref{thm:VQA}}\label{prf:VQA}
\begin{proof}
First, we show the asymptotic normality of $\sqrt{n} \lp  \hat V_\Pi - V_\Pi \rp$.
    Observe that 
    \begin{align*}
        \hat V_\Pi - V_\Pi = \lp\lp I-\alpha\Pi \mathbf{\hat M}\rp^{-1} - \lp I-\alpha\Pi \mathbf{M}\rp^{-1}\rp g,
    \end{align*}
    where the right-hand side can be rewritten as
    \begin{align*}
         \text{RHS} &= \alpha \lp I-\alpha\Pi \mathbf{\hat M}\rp^{-1} \Pi \lp \mathbf{\hat M} - \mathbf{M} \rp \lp I-\alpha\Pi \mathbf{M}\rp^{-1} g \\
         & = \alpha \lp I-\alpha\Pi \hat{\mathbf{M}}\rp^{-1} \Pi \lp \hat{\mathbf{M}} - \mathbf{M} \rp V_\Pi,
    \end{align*}
    using the matrix property $A^{-1} - B^{-1} = A^{-1}(B-A)B^{-1}$.
    Taking the transpose and vectorizing both sides, we get
    \begin{align*}
        \lp  \hat V_\Pi - V_\Pi \rp^\top & = \alpha V_\Pi^\top (\mathbf{\hat M}^\top - \mathbf{M}^\top) \Pi^\top \lp I-\alpha\mathbf{\hat M}^\top \Pi^\top \rp^{-1} \\ 
        \Vec{\lp \hat V_\Pi - V_\Pi \rp^\top} & = \alpha \left[\lp I-\alpha \Pi \mathbf{\hat M} \rp^{-1} \Pi \otimes  V_\Pi^\top\right]\Vec{\mathbf{\hat M}^\top - \mathbf{M}^\top} \\
        \hat V_\Pi - V_\Pi & = \alpha \left[\lp I-\alpha \Pi \mathbf{\hat M} \rp^{-1} \Pi \otimes  V_\Pi^\top\right]\Vec{\mathbf{\hat M}^\top - \mathbf{M}^\top}.
    \end{align*}
    Now, multiplying both sides by $\sqrt{n}$ and using Corollary \ref{cor:improper-clt}, the asymptotic normality of $\sqrt{n} \lp  \hat V_\Pi - V_\Pi \rp$ follows from the Slutsky's theorem. 

Second, we show the asymptotic normality of $\sqrt{n} \lp  \hat Q_\Pi - Q_\Pi \rp$. 
Similar to the value function, observe that 
    \begin{align*}
        \hat Q_\Pi - Q_\Pi = \lp\lp I-\alpha\mathbf{\hat M}\Pi \rp^{-1} - \lp I-\alpha\mathbf{M}\Pi \rp^{-1}\rp r,
    \end{align*}
    whose right-hand side can be rewritten as
    \begin{align*}
         \text{RHS} &= \alpha \lp I-\alpha\mathbf{\hat M}\Pi \rp^{-1}  \lp \mathbf{\hat M} - \mathbf{M} \rp \Pi\lp I-\alpha \mathbf{M}\Pi\rp^{-1} r \\
         & = \alpha \lp I-\alpha \mathbf{\hat M}\Pi\rp^{-1}  \lp \mathbf{\hat M} - \mathbf{M} \rp \Pi Q_\Pi.
    \end{align*}
    Taking the transpose and vectorizing both sides, we get
    \begin{align*}
        \lp  \hat Q_\Pi - Q_\Pi \rp^\top & = \alpha Q_\Pi^\top \Pi^\top (\mathbf{\hat M}^\top - \mathbf{M}^\top) \lp I-\alpha\Pi^\top \mathbf{\hat M}^\top  \rp^{-1} \\ 
        \Vec{\lp \hat Q_\Pi - Q_\Pi \rp^\top} & = \alpha \left[\lp I-\alpha \mathbf{\hat M} \Pi \rp^{-1}  \otimes  Q_\Pi^\top \Pi^\top \right]\Vec{\mathbf{\hat M}^\top - \mathbf{M}^\top} \\
        \hat Q_\Pi - Q_\Pi & = \alpha \left[\lp I-\alpha \mathbf{\hat M} \Pi \rp^{-1}  \otimes  Q_\Pi^\top \Pi^\top \right]\Vec{\mathbf{\hat M}^\top - \mathbf{M}^\top} .
    \end{align*}
    Now, multiplying both sides by $\sqrt{n}$ and using Corollary \ref{cor:improper-clt}, the asymptotic normality of $\sqrt{n} \lp  \hat Q_\Pi - Q_\Pi \rp$ follows from the Slutsky's theorem. 

Finally, we show the asymptotic normality of $\sqrt{n} \lp  \hat A_\Pi - A_\Pi \rp$.
Recall that $K = \operatorname{diag}(\mathbf{1}_k, \ldots, \mathbf{1}_k)$, where $\mathbf{1}_k$ is the ones vector of length $k$, and $A_\Pi = Q_\Pi - KV_\Pi$.
Then
\begin{align*}
\hat A_\Pi - A_\Pi & = \lp \hat Q_\Pi - Q_\Pi\rp - K \lp \hat V_\Pi - V_\Pi\rp \\
& = \alpha \lb \lp I - \alpha \mathbf{\hat M} \Pi\rp^{-1} \otimes Q_\Pi^\top \Pi^\top \rb \Vec{\mathbf{\hat M}^\top- \mathbf{M}^\top} \\
& \hspace{1in} -\alpha K \lb \lp I - \alpha \Pi \mathbf{\hat M} \rp^{-1} \Pi \otimes V_\Pi^\top  \rb \Vec{\mathbf{\hat M}^\top - \mathbf{M}^\top}.
\end{align*}
Now, multiplying both sides by $\sqrt{n}$ and using Corollary \ref{cor:improper-clt}, the asymptotic normality of $\sqrt{n} \lp  \hat A_\Pi - A_\Pi \rp$ follows from the Slutsky's theorem. 
\end{proof}
\subsection{Proof of Corollary~\ref{cor:Q-proper}}\label{sec:prf-Q-thm}
\begin{proof}
Following an entirely analogous way to the proof of Theorem~\ref{thm:VQA}, we get
\begin{align*}
    \hat Q_\Pi - Q_\Pi = B\Vec{\mathbf{\hat M}^\top - \mathbf{M}^\top},
\end{align*}
where $B=\alpha\left[\lp I-\alpha \mathbf{\hat M} \Pi \rp^{-1} \otimes  Q_\Pi^\top  \Pi^\top \right]$ is a $dk \times d^2k$ matrix.
Let $b_{(s,l),(s,t)}\pow l$ denote the $\lp (s-1)k+l, (s-1)dk+(l-1)d+t \rp$ entry of $B$.
Then
\begin{align*}
    \lp\hat Q_\Pi - Q_\Pi\rp \odot \Vec{\mathbf{N}^\top} &= B\Vec{\mathbf{\hat M}^\top - \mathbf{M}^\top} \\
    &=\left[\sum_{(s,l) \in \mathcal S \times \mathcal A, t \in \mathcal S} \sqrt{N_1 \pow 1} b_{(1,1), (s,t)} \pow l 
        \lp \hat M_{s,t} \pow l - M_{s,t} \pow l\rp, \right. \\
    &\quad \left. \ldots, \sum_{(s,l) \in \mathcal S \times \mathcal A, t \in \mathcal S} \sqrt{N_d \pow k} b_{(d,k), (s,t)} \pow l 
        \lp \hat M_{s,t} \pow l - M_{s,t} \pow l\rp\right]^\top \\
    &= \left[\sum_{(s,l) \in \mathcal S \times \mathcal A, t \in \mathcal S} \sqrt{\frac{N_1 \pow 1}{N_s \pow l}} b_{(1,1), (s,t)} \pow l 
        \sqrt{N_s \pow l} \lp \hat M_{s,t} \pow l - M_{s,t} \pow l\rp, \right. \\
    &\quad \left. \ldots, \sum_{(s,l) \in \mathcal S \times \mathcal A, t \in \mathcal S} \sqrt{\frac{N_d \pow k}{N_s \pow l}} b_{(d,k), (s,t)} \pow l 
        \sqrt{N_s \pow l}\lp \hat M_{s,t} \pow l - M_{s,t} \pow l\rp\right]^\top.
\end{align*}
Lemma~\ref{lemma:consistency} ensures that $\nicefrac{\expec[N_s \pow l]}{N_s \pow l} \xrightarrow{\ a.s. \ } 1$, and Assumption~\ref{ass:semi-ergodic} ensures that $\nicefrac{\expec[N_s \pow l]}{n} \rightarrow p_s \pow l > 0$, hence $\sqrt{\nicefrac{N_{s'} \pow {l'}}{N_s \pow l}} \xrightarrow{\ a.s. \ } \nicefrac{p_{s'} \pow {l'}}{p_s \pow l}$ for any $(s,l),(s',l') \in \mathcal S \times \mathcal A$.
The asymptotic normality of $\lp\hat Q_\Pi - Q_\Pi\rp \odot \Vec{\mathbf{N}^\top}$ then follows from Theorem~\ref{thm:eta-clt}.
\end{proof}
\subsection{Proof of Theorem \ref{thm:optpol}}\label{prf:optpol}
\begin{proof}
    Under the hypothesis of the theorem, as long as $\sup_{\Pi\in \Delta(\mathcal{A})}\|\hat V_\Pi-V_\Pi\|_{\infty}\overset{p}{\rightarrow}0$ it follows from the consistency of M-estimators \citep[Theorem 5.7]{vandervaart} that $\hat \Pi_{opt}\overset{p}{\rightarrow}\Pi_{opt}$. Fix any $\Pi$, and observe that
    \begin{align*}
        \left\|V_\Pi-\hat{V}_\Pi\right\|_{\infty} & =\left\|\lp(I-\alpha\Pi\hat{\mathbf{M}})^{-1}-(I-\alpha\Pi\mathbf{M})^{-1}\rp g\right\|_{\infty} \\
        & \leq\left\|\lp\lp I-\alpha\Pi\hat{\mathbf{M}}\rp^{-1}-\lp I-\alpha\Pi\mathbf{M}\rp^{-1}\rp\right\|_{\infty}\|g\|_{1} \\
        & \leq(1-\alpha)^{-2}\sqrt{dk}\left\|((I-\alpha\Pi\hat{\mathbf{M}})-(I-\alpha\Pi\mathbf{M}))\right\|_{\infty}\|g\|_{1} \\
        & \leq\frac{\alpha}{(1-\alpha)^{2}}\sqrt{dk}\left\|\Pi\hat{\mathbf{M}}-\Pi\mathbf{M}\right\|_{\infty}\|g\|_{1}\\
        & \leq \frac{\alpha}{(1-\alpha)^{2}}\sqrt{dk}\|\Pi\|_{1,\infty}\left\|\hat{\mathbf{M}}-\mathbf{M}\right\|_{\infty}\|g\|_{1}\\
        & = \frac{\alpha}{(1-\alpha)^{2}}\sqrt{dk}\left\|\hat{\mathbf{M}}-\mathbf{M}\right\|_{\infty}\|g\|_{1}
     \end{align*}
    where $\|\cdot\|_{1,\infty}$ is the row-wise $1$ and column-wise $\infty$ norm of a matrix. Observe that the rows of any policy sum to $1$, therefore $\|\Pi\|_{1,\infty}=1$. The right-hand side is independent of $\Pi$ and $\mathbf{\hat M}\overset{p}{\rightarrow}\mathbf{M}$. Therefore, by taking the supremum with respect to $\Pi$ on both sides and letting $n$ tend to $\infty$, an application of \citet[Theorem 5.7]{vandervaart} implies that $\hat \Pi_{opt}\overset{p}{\rightarrow}\Pi_{opt}$.

    We now prove the second part of our theorem. Since $\hat \Pi_{opt}$ maximizes $\hat V_{\hat \Pi_{opt}}$, it follows that for any given state $s$, $\hat V_{\hat \Pi_{opt}}(s)-\hat V_{\Pi_{opt}}(s)>0$, and
    \begin{align*}
        \hat V_{\hat \Pi_{opt}}-V_{\Pi_{opt}} & = \hat V_{\hat \Pi_{opt}}-\hat V_{\Pi_{opt}}+\hat V_{\Pi_{opt}}-V_{\Pi_{opt}}\\
        & \geq \hat V_{\Pi_{opt}}-V_{\Pi_{opt}},
    \end{align*}
    where the vector inequalities are coordinate-wise. It similarly follows that
    \begin{align*}
        \hat V_{\hat \Pi_{opt}}-V_{\Pi_{opt}} \leq \hat V_{\hat \Pi_{opt}}-V_{\hat \Pi_{opt}}. 
    \end{align*}
    Therefore, 
    \begin{align*}
         \hat V_{\Pi_{opt}}-V_{\Pi_{opt}}&\leq  \hat V_{\hat \Pi_{opt}}-V_{\Pi_{opt}}\leq \hat V_{\hat \Pi_{opt}}-V_{\hat \Pi_{opt}} \text{ and }\\
         \sqrt{n}\lp\hat V_{\Pi_{opt}}-V_{\Pi_{opt}}\rp&\leq  \sqrt{n}\lp\hat V_{\hat \Pi_{opt}}-V_{\Pi_{opt}}\rp\leq \sqrt{n}\lp \hat V_{\hat \Pi_{opt}}-V_{\hat \Pi_{opt}}\rp.
    \end{align*}
    Since $\hat \Pi_{opt}\overset{p}{\rightarrow}\Pi_{opt}$ it now follows using the Slutsky's theorem and the Sandwich theorem that
    \begin{align*}
         \sqrt{n}\lp\hat V_{\hat \Pi_{opt}}-V_{\Pi_{opt}}\rp\overset{d}{\rightarrow}\Ncal\lp 0,\Sigma_V\pow{\Pi_{opt}} \rp.
    \end{align*}
    This completes the proof.
\end{proof}
\section{Proofs of Other Propositions}
\subsection{Proof of Proposition~\ref{prop:mle}}
\label{sec:prf-mle}
\begin{proof}
    Given a sample of the controlled Markov chain $\{(s_i, l_i)\}_{i=0}^n$, the likelihood of the transition probabilities $\mathbf{M}$ is
    \[
    \mathcal L(\mathbf{M}) = \mathbb P(X_0 = s_0) \pi\pow 0(s_0, l_0) \prod_{i=1}^{n} \lp M_{s_{i-1}, s_i} \pow{l_{i-1}} \pi\pow i(s_i, l_i) \rp,
    \]
    where $\pi\pow i$ is the conditional probability of $\{X_i=s_i,a_i=l_i\}$ given $\{\History_0^{i-1}=\history_0^{i-1}\}$. Then,
    the log-likelihood is
    \[
    l(\mathbf{M}) = \sum_{i=1}^{n} \log M_{s_{i-1}, s_i} \pow {l_{i-1}}+ \sum_{i=1}^n \log \pi\pow i(s_i, l_i) + \log \mathbb P(X_0=s_0) + \log \pi\pow 0(s_0, l_0).
    \]
    Given the $dk$ constraint equations
    \[
    \sum_{t \in \mathcal S} M_{s, t} \pow l = 1,
    \]
    and $dk$ lagrange multipliers $\lambda_{s,l}, \, s=1, \ldots, d, \, l=1, \ldots, k$, the Lagrangian is
    \[
    l(\mathbf{M}) - \sum_{s=1}^d \sum_{l=1}^k \lambda_{s,l}\lp \sum_{t \in \mathcal S} M_{s, t} \pow l - 1 \rp.
    \]
    Taking derivatives with respect to $M_{s,t} \pow l$, and setting them to be $0$ at $\hat M_{s,t} \pow l$, we have
    \begin{align*}
        \frac{N_{s,t} \pow l}{\hat M_{s,t} \pow l} - \lambda_{s,l} &= 0, \\
        \frac{N_{s,t} \pow l}{\lambda_{s,l}} &= \hat M_{s,t} \pow l.
    \end{align*}
    Thus,
    \begin{align*}
        \sum_{t \in \mathcal S} \frac{N_{s,t} \pow l}{\lambda_{s,l}} = 1,\qquad  \lambda_{s,l} = \sum_{t \in \mathcal S} N_{s,t} \pow l = N_s \pow l.
    \end{align*}
    Therefore, $\hat M_{s,t} \pow l = \frac{N_{s,t} \pow l}{N_s \pow l}$, the count-based empirical estimator is the maximum likelihood estimator.
\end{proof}
\subsection{Proof of Proposition~\ref{prop:count-lower-bound}}
\label{sec:proof-count-lower-bound}
\begin{proof}
Define the process $Z_i$ as
\begin{align*}
Z_i = \sum_{p=1}^i \lp\frac{\tau_{s,l} \pow p}{T_p} - 1\rp, Z_0 = 0.
\end{align*}
Then 
\begin{align*}
\mathbb{E}\left[Z_i | \mathcal{F}_{\sum_{p=1}^{i-1} \tau_{s,l}\pow{p}}\right] &= \sum_{p=1}^{i} \mathbb{E}\left[\frac{\tau_{s,l} \pow p}{T_p} - 1| \mathcal{F}_{\sum_{p=1}^{i-1} \tau_{s,l}\pow{p}}\right] \\
&= \mathbb{E}\left[\frac{\tau_{s,l} \pow i}{T_i} - 1| \mathcal{F}_{\sum_{p=1}^{i-1} \tau_{s,l}\pow{p}}\right] + \sum_{p=1}^{i-1} \lp\frac{\tau_{s,l} \pow p}{T_p} - 1\rp \\
&= \mathbb{E}\left[\frac{\tau_{s,l} \pow i}{T_i} - 1| \mathcal{F}_{\sum_{p=1}^{i-1} \tau_{s,l}\pow{p}}\right] + Z_{i-1}.
\end{align*}
As $\mathbb{E}[\tau_{s,l} \pow i | \mathcal{F}_{\sum_{p=1}^{i-1} \tau_{s,l}\pow{p}}] \leq T_i$ almost surely, $\mathbb{E}\left[\nicefrac{\tau_{s,l} \pow i}{T_i} - 1| \mathcal{F}_{\sum_{p=1}^{i-1} \tau_{s,l}\pow{p}}\right] < 0$, hence $\{Z_i\}$ is a supermartingale with respect to $\mathcal{F}_{\sum_{p=1}^{i-1} \tau_{s,l}\pow{p}}$.

Let $N = N_s \pow l (n) + 1$.
As $N_s \pow l(n)$ is a valid stopping time, $N$ is a valid stopping time, and $\sum_{i=1}^N \tau_{s,l} \pow i > n$.
By Doob's Optional Stopping Theorem,
\begin{align*}
\mathbb{E}[Z_N] \leq \mathbb{E}[Z_0] = 0 \implies \mathbb{E}\left[\sum_{i=1}^N \frac{\tau_{s,l} \pow i}{T_i}\right] \leq \mathbb{E}[N].
\end{align*}

We may assume without loss of generality that $\{T_i\}$ is non-decreasing.
Indeed, the non-decreasing case is the only regime in which the stopping-time
argument below is needed; if $\{T_i\}$ is not eventually non-decreasing, the
claim follows by simpler deterministic bounds.

Since $\sum_{i=1}^N \tau_{s,l} \pow i > n$, and $\{T_i\}$ is a non-decreasing sequence, we have
\begin{align*}
\sum_{i=1}^N \frac{\tau_{s,l} \pow i}{T_i} \geq \frac{\sum_{i=1}^N \tau_{s,l} \pow i}{T_N} > \frac{n}{T_N} \implies \mathbb{E}\left[\frac{n}{T_N}\right] \leq \mathbb{E}[N].
\end{align*}
For the convex function $\phi(x) = n/x$, the Jensen's inequality gives
\begin{align*}
\mathbb{E}\left[\frac{n}{T_N}\right] \geq \frac{n}{\mathbb{E}[T_N]}.
\end{align*}
Therefore,
\begin{align*}
\mathbb{E}[N] \geq \mathbb{E}\left[\frac{n}{T_N}\right]    \geq \frac{n}{\mathbb{E}[T_N]}.
\end{align*}
As $T_i = O(i^\alpha)$, we have $T_N = O(N^\alpha)$, hence $\mathbb{E}[T_N] = O(\mathbb{E}[N^\alpha])$. 
For the concave function $\phi(x) = x^\alpha$ ($\alpha < 1$), Jensen's inequality gives
\begin{align*}
\mathbb{E}[N^\alpha] \leq \mathbb{E}[N]^\alpha.
\end{align*}
Hence, $\mathbb{E}[T_N] = O(\mathbb{E}[N]^\alpha)$ and thus
$
\mathbb{E}[N] = \Omega\lp \frac{n}{\mathbb{E}[N]^\alpha} \rp
$, 
which implies 
$\mathbb{E}[N]^{1+\alpha} = \Omega(n)$ and in turn $\mathbb{E}[N_s \pow l (n)] = \Omega\lp n^{\frac{1}{1+\alpha}}\rp.$
\end{proof}

\subsection{Proof of Proposition~\ref{thm:no-clt}}
\label{sec:proof-no-clt}
\begin{proof}
Let $\Mcal_{\mathcal S,\mathcal A}$ be the class of all probability measures over state-action pairs for a CMC with initial distribution $\initialD$. 
Any element $\Pcal\in \Mcal_{\mathcal S,\mathcal A}$ has an associated set of transition matrices $\lc M\pow 1,\dots,M\pow k\rc$ and a conditional distributions over the action $\{a_i\}$ conditional on the history until time $i$.  
Then the \emph{minimax risk} of an estimator $\hat M =(\hat M\pow 1,\dots, \hat M\pow k)$ of $M=(M\pow 1,\dots,M\pow k)$ is defined as
\[
\Rcal_m := \inf_{\mathbf{\hat M}}\prob\lp \sup_{l\in\mathcal A}\| \hat M\pow l-M\pow l \|_{\infty} >\eps \rp.
\]    
The proof now proceeds by creating a counterexample and then bounding from below its minimax risk.
Suppose for some $(s,l) \in \mathcal S \times \mathcal A$,
\[
\mathbb P\lp a_i=l | X_i = s\rp = \nicefrac{1}{(i+1)^2} \text{ for all } i \geq 0.
\]
It is easy to verify that $\sum_{i=1}^{\infty}\prob\lp X_i=s, a_i=l \rp < \infty$.
By the Borel-Cantelli lemma, as \[\sum_{i=1}^{\infty}\prob\lp X_i=s, a_i=l \rp < \infty,\] 
$N_s \pow l(\infty) < \infty$ 
almost surely. 
Therefore, there exists a constant $N$ such that $N_s \pow l (\infty) \leq N$ almost surely.

Suppose that $|\mathcal S| = 2(N+1)$.
Given any $\epsilon \in (0, \nicefrac{1}{2(N+1)})$, the transition probabilities from the state-action pair $(s,l)$ is given by 
\begin{align*}
M_s \pow l(\xi) & := \lb M_{s,1} \pow l(\xi), \ldots, M_{s,2(N+1)} \pow l(\xi)\rb \\
& = \lb \underbrace{2\epsilon \xi_1, \ldots, 2\epsilon \xi_{N+1}}_{N+1 \text{ times}}, \underbrace{\frac{1}{N+1} - 2\epsilon \xi_1, \ldots, \frac{1}{N+1} - 2\epsilon \xi_{N+1}}_{N+1 \text{ times}} \rb, \numberthis\label{eq:xi_kernel}
\end{align*}
where $\xi = \{\xi_1, \ldots, \xi_{N+1}\} \in \{0, 1\}^{N+1}$ are unknown parameters.
Therefore, to correctly estimate the transition matrix $M_s \pow l(\xi)$, we only need to correctly estimate $\xi$. 

By (\ref{eq:xi_kernel}), we have $\mathbb P \lp N_{s,1} \pow l = 0, N_{s,N+2} \pow l = 0\rp \geq \lp\nicefrac{N}{N+1}\rp^N \geq \nicefrac{1}{e} > \nicefrac{1}{3}$.
Now we observe that
\[
\sup_{l\in\mathcal A}\| \hat M\pow l-M \pow l \|_{\infty} \geq\nrm{\hat M_s\pow{l} - M_{s} \pow l(\xi)}_\infty.
\]
Therefore,
\begin{align*}
   \Rcal_m
    &\ \geq \inf_{\hat M\pow 1} \sup_{\xi\pow{1}\in\lc0,1\rc\pow{d/3}} \prob\lp{\nrm{\hat M_s\pow{l} - M_{s} \pow l(\xi)}_\infty > \epsilon}\rp\\
    & \ \geq \inf_{\hat M\pow 1} \sup_{\xi\pow{1}\in\lc0,1\rc\pow{d/3}} \prob\lp{\nrm{\hat M_s\pow{l} - M_{s} \pow l(\xi)}_\infty > \eps\gn N_{s,1} \pow l = 0, N_{s,N+2} \pow l = 0}\rp \\
    & \hspace{2in} \times \mathbb P \lp N_{s,1} \pow l = 0, N_{s,N+2} \pow l = 0\rp \\
    & \ > \frac{1}{3} \inf_{\hat M\pow 1} \sup_{\xi\pow{1}\in\lc0,1\rc\pow{d/3}} \prob\lp{\nrm{\hat M_s\pow{l} - M_{s} \pow l(\xi)}_\infty > \eps\gn N_{s,1} \pow l = 0, N_{s,N+2} \pow l = 0}\rp .
\end{align*}
We note that whenever $\xi\pow1\neq\xi\pow{2}\in
\lc 0, 1\rc^{N+1}$, we have $
\nrm{M_{s} \pow l (\xi \pow 1)  - M_{s} \pow l (\xi \pow 2)}_\infty = 2\eps.
$
For any estimate $\hat M_s \pow l$ of $M_s \pow l (\xi)$, define $\xi^\star$ to be the $\xi$ such that
$
\xi^\star = \arg \min_{\xi}\nrm{\hat M_s \pow l - M_{s} \pow l (\xi)}_\infty.
$
Then for $\xi \neq\xi^\star$ we have 
\begin{align*}
2\eps  = \nrm{M_{s} \pow l(\xi) - M_{s} \pow l(\xi^\star)} _\infty & \leq  \nrm{M_{s} \pow l(\xi) - \hat M_s \pow l }_\infty + \nrm{\hat M_s \pow l - M_{s} \pow l (\xi^\star)}_\infty \\
& \leq 2 \nrm{M_{s} \pow l(\xi) - \hat M_s \pow l }_\infty.
\end{align*}
Therefore, 
$\lc \xi^\star \neq \xi \rc \subset \lc  \nrm{M_{s} \pow l(\xi) - \hat M_s \pow l }_\infty\geq \eps \rc$ and $\mmrisk$ can be further lower bounded by 
\begin{align*}
  \mmrisk &> \frac{1}{3}
  \inf_{\hat M_s \pow l} \max_{\xi\in\lc0,1\rc^{N+1}}
  \prob\lp\xi^\star \neq \xi \gn N_{s,1} \pow l = 0, N_{s,N+2} \pow l = 0\rp\\
	&=
  \frac{1}{3} \inf_{\hat \xi} \max_{\xi\in\lc0,1\rc^{N+1}}
  \prob\lp\hat \xi \neq \xi \gn N_{s,1} \pow l = 0, N_{s,N+2} \pow l = 0\rp,
\end{align*}
where $\hat\xi$ is any estimate of $\xi^\star$ such that $\hat\xi: (X_0,a_0,\dots,X_n,a_n)\mapsto \{0,1\}^{N+1}$.
    
When $N_{s,1} \pow l = 0 \text{ and } N_{s,N+2} \pow l = 0$, the estimate $\hat \xi_1$ is equivalent to choosing uniformly over $\{0, 1\}$.
The probability of choosing incorrectly is $\nicefrac{1}{2}$. 
We get as a consequence that,
\begin{align*}
\frac{1}{3} \inf_{\hat \xi} \max_{\xi\in\lc0,1\rc^{N+1}}
  \prob\lp\hat \xi \neq \xi \gn N_{s,1} \pow l = 0, N_{s,N+2} \pow l = 0\rp \geq \frac{1}{6}.
\end{align*}
In conclusion, 
\[
    \Rcal_m > \frac{1}{6}.
\]
Therefore, for every $\epsilon \in (0, \nicefrac{1}{2(N+1)})$, there exists a controlled Markov chain such that it has no estimator $\hat M\pow l$ such that $\prob\lp  \sup_{l\in \mathcal A}\| \hat M\pow l-M\pow l \|_\infty>\eps\rp \leq \nicefrac{1}{6}$.

It follows that for any sequence $b_n\rightarrow\infty$, the event $\lc b_n\sup_{s,l,t} \lv \hat M_{s,t}\pow l-M_{s,t}\pow l\rv\rightarrow\infty\rc$ has probability at least $\nicefrac{1}{6}$. The conclusion follows.
\end{proof}

\subsection{Proof of Proposition~\ref{prop:gof}}
\label{sec:proof-GOF}
\begin{proof}
As Assumptions~\ref{ass:return-time-growth} and \ref{ass:eta-mix} hold, Theorem~\ref{thm:eta-clt} holds.
Then for any $(s,l) \in \mathcal S \times \mathcal A$, we have the following Pearson's chi-square test statistics
\[
    \sum_{t \in \mathcal S} \frac{\lp N_{s,t} \pow l - N_s \pow l M_{s,t} \pow l \rp^2}{N_s \pow l M_{s,t} \pow l}\overset{d}{\rightarrow} \chi^2_{d_{(s,l)} - 1},
\]
where $d_{(s,l)}$ is the number of states $t$ such that $M_{s,t} \pow l > 0$.
There are $dk$ such chi-square statistics in total.

Theorem~\ref{thm:eta-clt} implies that the $dk$ chi-square statistics are asymptotically independent.
Then
\begin{align*}
    \sum_{(s, l) \in \mathcal S \times \mathcal A, t \in \mathcal S} \frac{\lp N_{s,t} \pow l - N_s \pow l M_{s,t} \pow l \rp^2}{N_s \pow l M_{s,t} \pow l} \overset{d}{\rightarrow} \chi^2_{\sum_{(s,l) \in \mathcal S \times \mathcal A} d_{(s,l)} - dk}.
\end{align*}
\end{proof}

\bibliography{biblio_cleaned_minimal}

\end{document}